
\documentclass[final,leqno]{siamltex}

% definitions used by included articles, reproduced here for
% educational benefit, and to minimize alterations needed to be made
% in developing this sample file.

% some definitions of bold math italics to make typing easier.
% They are used in the corollary.

\usepackage{amsfonts,amssymb} %%%add
\usepackage{amsmath} %%%add
\usepackage{color} %%%add
\usepackage{graphicx} %%%add
\usepackage{epsfig,mathrsfs} %%%add
\usepackage[bf,SL,BF]{subfigure} %%%add
\usepackage{fancyhdr} %%%add
\usepackage{CJK} %%%add
\usepackage{caption} %%%add
\usepackage{wrapfig} %%%add
\usepackage{cases} %%%add
\usepackage{bm}%%%add
\usepackage{algorithm}
\usepackage{algorithmic}
\newtheorem{remark}{Remark}[section] %%%add
\newtheorem{example}{Example}[section] %%%add

\title{High order algorithm for  the time-tempered fractional Feynman-Kac equation
\thanks{This work was supported by NSFC 11671182. The first author was also partially  supported by the Fundamental Research Funds for the Central Universities
under Grant No. lzujbky-2016-105. }}

\author{Minghua Chen\thanks{Corresponding author. School of Mathematics and Statistics, Gansu Key Laboratory of Applied Mathematics and Complex Systems,
 Lanzhou University, Lanzhou 730000, P.R. China  (Email: chenmh@lzu.edu.cn).}
        \and Weihua Deng\thanks{ School of Mathematics and Statistics, Gansu Key Laboratory of Applied Mathematics and Complex Systems,
 Lanzhou University, Lanzhou 730000, P.R. China (Email: dengwh@lzu.edu.cn). }}

\begin{document}

\maketitle

\begin{abstract}
We provide and analyze the high order algorithms for the model describing the functional distributions of particles performing anomalous motion with power-law  jump length and tempered power-law
waiting time. The model is derived in [Wu, Deng, and Barkai, Phys. Rev. E., 84 (2016), 032151], being called the time-tempered fractional  Feynman-Kac equation.
The key step of designing the algorithms is to discretize the time tempered fractional substantial derivative, being defined as
$${^S\!}D_t^{\gamma,\widetilde{\lambda}} G(x,p,t)\!=\!D_t^{\gamma,\widetilde{\lambda}} G(x,p,t)\!-\!\lambda^\gamma G(x,p,t)
~{\rm with}~\widetilde{\lambda}=\lambda+ pU(x),\, p=\rho+J\eta,\, J=\sqrt{-1},$$
where
$$D_t^{\gamma,\widetilde{\lambda}} G(x,p,t) =\frac{1}{\Gamma(1-\gamma)} \left[\frac{\partial}{\partial t}+\widetilde{\lambda} \right]
\int_{0}^t{\left(t-z\right)^{-\gamma}}e^{-\widetilde{\lambda}\cdot(t-z)}{G(x,p,z)}dz,$$
and $\lambda \ge 0$, $0<\gamma<1$, $\rho>0$, and $\eta$ is a real number.
The designed schemes are unconditionally stable and have the global truncation error $\mathcal{O}(\tau^2+h^2)$, being theoretically proved and numerically verified
in {\em complex} space. Moreover, some simulations for the distributions of the first passage time are performed, and the second order convergence is also obtained for solving the `physical' equation (without artificial source term).
\end{abstract}

\begin{keywords}
Time-tempered fractional Feynman-Kac equation, Tempered fractional substantial derivative, Stability and convergence, First passage time
\end{keywords}

\begin{AMS}
26A33, 65L20
\end{AMS}

\pagestyle{myheadings}
\thispagestyle{plain}
\markboth{M. H. CHEN AND W. H. DENG}{ALGORITHM FOR TIME-TEMPERED FRACTIONAL FEYNMAN-KAC EQUATION}

\section{Introduction}

As a generalization of Brownian walk,  the continuous time random walk (CTRW) model allows the incorporation of the waiting time distribution $\psi(t)$
and the general jump length distribution $\eta(x)$ \cite{Klafter:11}. If both the first moment of $\psi(t)$ and the second moment of $\eta(x)$ are bounded, the CTRW model describes the normal diffusion, otherwise it characterizes the anomalous diffusion. Considering the life of the biological particles is finite, sometimes it is a more sensible choice to use the tempered power-law waiting time distribution, $e^{-\lambda t} t^{-\alpha-1}$, instead of the divergent power-law one, $t^{-\alpha-1}$; on the other hand, the new model can also describes the dynamics of very slow transition from subdiffusion to normal diffusion, then to superdiffusion, which has many potential applications in physical, biological, and chemical process
 \cite{Baeumera:10,Bruno:04,Cartea:07,del-Castillo-Negrete:09,Hanert:14,Stanislavsky:14,Wu:16}. In fact, there are several ways to make the tempering, e.g., directly truncating the heavy tail of the power-law distribution \cite{Sokolov:04}. Here we use the exponential tempering, which has both mathematical and practical advantages \cite{Sabzikar:14}.

The model we discuss in this paper, given in \cite{Wu:16}, is for the distribution of tempered non-Brownian functional. If $x(\tau)$ represents a tempered non-Brownian motion, a functional over a fixed time interval $[0,t]$ is defined as $A=\int_0^t U(x(\tau))d\tau$, where $U(x)$ is some prescribed function. For each realization of the tempered non-Brownian path, the quantity $A$ has a different value and one is interested in the probability density function of $A$, governed by the partial differential equation (PDE) derived in \cite{Wu:16}.  As listed in \cite{Majumdar:05}, the functional has many applications ranging from probability theory, finance, data analysis, and disordered systems to computer science. In probability theory, one of the important objects of interest is the occupation time $A=\int_0^t \theta (x(\tau))$, where $\theta(x)=1$ if $x>0$, otherwise $\theta(x)=0$. For the Kardar-Parisi-Zhang (KPZ) varieties, $U(x)$ is taken as $x^2$. In finance, for the integrated stock price
up to some `target' time $t$, $U(x)=e^{-\beta x}$. For describing the stochastic behaviour of daily temperature records, $U(x)$ is taken as $x \theta(x)$. The functional with $U(x)=|x|$ is studied by physicists in the context of electron-electron and phase coherence in one-dimensional weakly disordered quantum wire. One of the most important functionals is the first passage functional, which appears in physics, astronomy, queuing theory, etc. With the first passage time $t_f$, the functional over the time interval $[0,t_f]$ is defined as $A=\int_0^{t_f} U(x(\tau))d\tau$. In this paper, based on the designed algorithm, we will also simulate the distribution of the first passage tempered non-Brownian functional.

%The CTRW model describes the normal diffusion if $\psi(t)$ and $\eta(x)$ have bounded first moment and
% bounded second moment, respectively.  Anomalous diffusion is characterized by the CTRW model if the waiting time distribution $\psi(t)$ having divergent first moment and/or
%the jump length distribution $\eta(x)$ having divergent second moment.
%However, in more practical problems are   between  anomalous diffusion and normal diffusion, these can be
%realized by  tempering or truncating the heavy tail of the power-law distribution \cite{Sokolov:04}.
%The tempered anomalous diffusion describes the vary slow transition from anomalous to normal diffusion
%and it has many applications in physical, biological, and chemical process
% \cite{Baeumera:10,Bruno:04,Cartea:07,del-Castillo-Negrete:09,Hanert:14,Sabzikar:14,Stanislavsky:14,Wu:16}.

For the numerical methods of fractional PDEs, almost all of them are for fractional diffusion equations and the related PDEs; and some important progresses have been made, including the finite difference method \cite{Chen:0013,Li:14,Meerschaert:04,Tian:12,Zhang:12,Zhuang:09}, finite element method \cite{Deng:08,Ervin:06,Zeng:013},  spectral method \cite{Xu:09,YZX:14}, fast method \cite{Pang:12,Wang:12}, etc; more recently, by weighting and shifting Gr\"{u}nwald's first order discretization \cite{Meerschaert:04} and Lubich's discretization \cite{Lubich:86}, a series of effective high order discretizations for space/time fractional derivatives are derived \cite{Chen:0013,Hao:15,Ji:15,Li:14,Tian:12}. Still discussing the diffusion of particles but with the tempered power-law waiting time and/or jump length distributions, the tempered fractional derivative is firstly introduced in \cite{Cartea:07} and further applied and numerically solved in \cite{Baeumera:10,Hanert:14,Li:16,Sabzikar:14}. Based on an extension of the concept of CTRW to position-velocity space, Friedrich et al \cite{Friedrich:06} derive the fractional Kramers-Fokker-Planck equation, which involves a collision operator called fractional substantial derivative, representing important nonlocal couplings in time and space. After that, it is further  found that the fractional substantial derivative plays a core role in the fractional Feynman-Kac equation \cite{Carmi:11}, which governs the probability amplitude associated with an entire motion of a particle as a function of time, rather than simply with a position of the particle at particular time. The mathematical properties of the operator are detailedly discussed in \cite{Chen:13} and the fractional Feynman-Kac equation is numerically solved in \cite{CD:15}.

In the recently derived model \cite{Wu:16} for the tempered functional distribution, a so-called tempered fractional substantial derivative is introduced, being defined as
%defined as
\begin{equation}\label{TemperedOperator}
{^S\!}D_t^{\gamma,\widetilde{\lambda}} G(x,p,t)\!=\!D_t^{\gamma,\widetilde{\lambda}} G(x,p,t)\!-\!\lambda^\gamma G(x,p,t)
~{\rm with}~\widetilde{\lambda}=\lambda+ pU(x),\, p=\rho+J\eta,
\end{equation}
where
\begin{equation}\label{SubstantialOperator}
D_t^{\gamma,\widetilde{\lambda}} G(x,p,t) =\frac{1}{\Gamma(1-\gamma)} \left[\frac{\partial}{\partial t}+\widetilde{\lambda} \right]
\int_{0}^t{\left(t-z\right)^{-\gamma}}e^{-\widetilde{\lambda}\cdot(t-z)}{G(x,p,z)}dz,
\end{equation}
and $\lambda \ge 0$, $0<\gamma<1$, $ J=\sqrt{-1}$, $\rho>0$, and $\eta$ is a real number. And the model, being the equivalent form of \cite[Eq. (32)]{Wu:16}, is
\begin{equation}\label{ModelEquation}
{^S\!}D_t^{\gamma,\widetilde{\lambda}} G(x,p,t) -D_t^{\gamma,\widetilde{\lambda}}\left[e^{-\widetilde{\lambda} t }G(x,p,0)\right]=K {\nabla}_x^{\alpha} G(x,p,t)+f(x,p,t),
\end{equation}
where $f(x,p,t)=\lambda D_t^{\gamma-1, \widetilde{\lambda}} e^{-pU(x)t } -\lambda^\gamma e^{-pU(x)t }$ and
$$
D_t^{\gamma-1, \widetilde{\lambda}} e^{-pU(x)t } =\frac{1}{\Gamma(1-\gamma)}
\int_{0}^t{\left(t-z\right)^{-\gamma}}e^{-\widetilde{\lambda}\cdot(t-z)}e^{-pU(x)z }dz.
$$
 This paper focuses on providing the high order discretization for the tempered fractional substantial derivative (\ref{TemperedOperator}). Then derive the high order algorithm for the time-tempered fractional Feynman-Kac equation and make the rigorous stability and convergence analysis to the algorithm. As a concrete application, we perform some numerical simulations for the distribution of the first passage time.

The rest of the paper is organized as follows. In the next section, we propose the high order discretization to the operator and high order algorithm to the model. In Sec. 3, we do the detailedly theoretical analyses for the stability and convergence with the second order accuracy in both time and space directions in complex space for the derived schemes. To verify the theoretical results, especially the convergence orders, the extensive numerical experiments are performed in Sec. 4; as an important application of the time-tempered fractional Feynman-Kac equation, we also numerically solve the distribution of the first passage time. The paper is concluded with some remarks in the last section.

\section{High order schemes for the time-tempered fractional Feynman-Kac equation}
The original time-tempered fractional Feynman-Kac equation \cite[Eq. (32)]{Wu:16} is derived as
\begin{equation}\label{OriginalModel}
\begin{split}
\frac{\partial}{\partial t}G_{x_0}(p,t)&=\left[ \lambda^\gamma D_t^{1-\gamma,\widetilde{\lambda}}-\lambda \right]\left[G_{x_0}(p,t)-e^{-pU (x_0)t} \right]-pU (x_0)G_{x_0}(p,t)
\\
&\quad
 +KD_t^{1-\gamma,\widetilde{\lambda}} {\nabla}_{x_0}^{\alpha} G_{x_0}(p,t),
\end{split}
\end{equation}
where the operator $D_t^{1-\gamma,\widetilde{\lambda}}$ is defined in (\ref{SubstantialOperator}). Following the ideas given in \cite{Deng:14,Deng:16}, rearranging the terms of (\ref{OriginalModel}), denoting $G_{x_0}(p,t)$ as $G(x,p,t)$, and performing $D_t^{\gamma-1,\widetilde{\lambda}}$ on both sides of (\ref{OriginalModel}) lead to (\ref{ModelEquation}). Like defining the Caputo fractional substantial derivative \cite{Chen:13,Deng:14}, for $\gamma \in (0,1)$, if we define the Caputo tempered fractional substantial derivative as
\begin{equation} \label{CaputoSubDerivative}
{_C^S}D_t^{\gamma,\widetilde{\lambda}}  G(x,p,t)=D_t^{\gamma,\widetilde{\lambda}} \left[G(x,p,t)-e^{-\widetilde{\lambda} t }G(x,p,0)\right]\!-\!\lambda^\gamma G(x,p,t),
\end{equation}
then (\ref{ModelEquation}) with appropriate boundary and initial conditions can be rewritten as
\begin{equation} \label{1.1}
\left\{ \begin{array}{lll}
 %{}
\displaystyle
{_C^S}D_t^{\gamma,\widetilde{\lambda}}  G(x,p,t)= K {\nabla}_x^{\alpha} G(x,p,t)+f(x,p,t); & &\\
G(x,p,0)=G_0(x,p)& {\rm for} & x \in \Omega=(0,b);\\
G(x,p,t)=0  & {\rm for} & (x,t) \in ({\mathbb{R}} \backslash \Omega) \times (0,T],
\end{array}
 \right.
\end{equation}
where $f(x,p,t)$ is the same as the one given in (\ref{ModelEquation}), but in the following, we take it as a more general known function; and the Riesz fractional derivative with $\alpha \in (1,2)$,
is defined as  \cite{Podlubny:99,Tarasov:10}
\begin{equation}\label{1.2}
{\nabla}_x^{\alpha} G(x,p,t) =-\kappa_{\alpha}\left( _{-\infty}D_x^{\alpha}+ _{x}\!D_{+\infty}^{\alpha} \right)G(x,p,t) ~~{\rm with}~~\kappa_{\alpha}=\frac{1}{2\cos(\alpha \pi/2)},
\end{equation}
\begin{equation}\label{1.3}
 _{-\infty}D_x^{\alpha}G(x,p,t)=
\frac{1}{\Gamma(2-\alpha)} \displaystyle \frac{\partial^2}{\partial x^2}
 \int_{-\infty}\nolimits^x{\left(x-\xi\right)^{1-\alpha}}{G(\xi,p,t)}d\xi,
\end{equation}
\begin{equation}\label{1.4}
 _{x}D_{\infty}^{\alpha}G(x,p,t)=
 \frac{1}{\Gamma(2-\alpha)}\frac{\partial^2}{\partial x^2}
\int_{x}\nolimits^{\infty}{\left(\xi-x\right)^{1-\alpha}}{G(\xi,p,t)}d\xi.
\end{equation}
%are the left and right Riemann-Liouville space fractional derivatives, respectively.
{\em Note:} Here ${\nabla}_x^{\alpha}G(x,p,t)=-\kappa_{\alpha}\left( _{0}D_x^{\alpha}+ _{x}\!D_{b}^{\alpha} \right)G(x,p,t)$, since we request $G(x,p,t)=0$ for $x \in (\mathbb{R} \backslash \Omega)$.

For designing the numerical scheme of (\ref{1.1}), we take the mesh points $x_i=ih,i=0,1,\ldots ,M$, and $t_n=n\tau,n=0,1,\ldots, N$, where
 $h=b/M$, $\tau=T/N$ are  the uniform space stepsize and time steplength, respectively.
Denote $G_{i,p}^n$ as the numerical approximation to $G(x_i,p,t_n)$.
In Subsection \ref{SubSec:2.1}, we provide the high order discretization schemes for the tempered fractional substantial derivative.

\subsection{Discretizations of the tempered fractional substantial derivative} \label{SubSec:2.1}
For numerically solving (\ref{1.1}), the first step is to discretize the tempered fractional substantial derivative, also defined in (\ref{TemperedOperator}),
\begin{equation}\label{2.1}
 {^S\!}D_t^{\gamma,\widetilde{\lambda}} G(x,p,t)\!=\!D_t^{\gamma,\widetilde{\lambda}} G(x,p,t)\!-\!\lambda^\gamma G(x,p,t),
\end{equation}
where
\begin{equation*}
0<\gamma<1,\,\widetilde{\lambda}=\lambda+ pU(x),~~ p=\rho+J\eta~~~{\rm with}~ \lambda\geq 0,\,  \rho>0,\, U(x)\ge 0, J=\sqrt{-1}.
\end{equation*}
\begin{remark}
The tempered fractional substantial derivative (\ref{2.1}) reduces to the fractional substantial derivative \cite{Chen:13,Friedrich:06} if $\lambda=0$.
The tempered fractional substantial derivative becomes to the tempered fractional  derivative \cite{Baeumera:10,Cartea:07,Li:16}  if $p=0$.
When $\lambda=0$ and $p=0$,  the operator  (\ref{2.1}) reduces to the traditional fractional derivative \cite{Tarasov:10}.
\end{remark}

Now, we introduce the following lemmas and denote
$${_{-\infty}}D_t^{\gamma,\widetilde{\lambda}} G(x,p,t) =\frac{1}{\Gamma(1-\gamma)} \left[\frac{\partial}{\partial t}+\widetilde{\lambda} \right]
\int_{-\infty}^t{\left(t-z\right)^{-\gamma}}e^{-\widetilde{\lambda}\cdot(t-z)}{G(x,p,z)}dz,$$
and
$${_{-\infty}\!\!\!}^SD_t^{\gamma,\widetilde{\lambda}} G(x,p,t)={_{-\infty}}D_t^{\gamma,\widetilde{\lambda}} G(x,p,t)-\lambda^\gamma G(x,p,t).$$
\begin{lemma}[\cite{Cartea:07,CD:15}]\label{lemma2.1}
 Let $0<\gamma<1$, $G(t)$ and ${_{-\infty}\!}D_t^{\gamma,\widetilde{\lambda}} G(t)$ belong to $L_1(\mathbb{R})$. Then
\begin{equation*}
\mathcal{F}\left({_{-\infty}}D_t^{\gamma,\widetilde{\lambda}} G(t) \right)
 =(\widetilde{\lambda}-i\omega)^{\gamma}\widehat{G}(\omega);
\end{equation*}
and
\begin{equation*}
\mathcal{F}\left({_{-\infty}\!\!\!}^SD_t^{\gamma,\widetilde{\lambda}} G(t) \right)
 =(\widetilde{\lambda}-i\omega)^{\gamma}\widehat{G}(\omega)-\lambda^{\gamma}\widehat{G}(\omega),
\end{equation*}
where $\mathcal{F}$ denotes Fourier transform operator and $\widehat{G}(\omega)=\mathcal{F}(G)$, i.e.,
\begin{equation*}
    \widehat{G}(\omega)=\int_{\mathbb{R}}e^{i\omega t }G(t)dt.
 \end{equation*}
\end{lemma}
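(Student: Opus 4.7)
The plan is to recognize the inner integral in the definition of ${_{-\infty}}D_t^{\gamma,\widetilde{\lambda}}G$ as a convolution on $\mathbb{R}$ and apply the convolution theorem, handling the outer differential operator $\partial_t + \widetilde{\lambda}$ as a multiplier in frequency space. Concretely, set
\begin{equation*}
k(t) = \frac{t^{-\gamma}}{\Gamma(1-\gamma)} e^{-\widetilde{\lambda} t}\,\chi_{\{t>0\}},
\end{equation*}
so that $\int_{-\infty}^{t}(t-z)^{-\gamma}e^{-\widetilde{\lambda}(t-z)}G(z)\,dz/\Gamma(1-\gamma) = (k * G)(t)$. Then $\mathcal{F}(k*G)(\omega)=\widehat{k}(\omega)\widehat{G}(\omega)$ under the stated $L_1$ hypotheses.

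Next I would compute $\widehat{k}(\omega)$ directly. Using the convention in the statement,
\begin{equation*}
\widehat{k}(\omega) = \frac{1}{\Gamma(1-\gamma)}\int_{0}^{\infty} t^{-\gamma}e^{-(\widetilde{\lambda}-i\omega)t}\,dt = (\widetilde{\lambda}-i\omega)^{\gamma-1},
\end{equation*}
by the standard Gamma-integral $\int_{0}^{\infty} t^{s-1}e^{-\beta t}\,dt = \Gamma(s)\beta^{-s}$ for $\mathrm{Re}(\beta)>0$, where here $s=1-\gamma\in(0,1)$ and $\beta=\widetilde{\lambda}-i\omega$. It is then immediate to apply $\partial_t+\widetilde{\lambda}$: transforming derivatives gives the Fourier multiplier $-i\omega+\widetilde{\lambda}=\widetilde{\lambda}-i\omega$, so
\begin{equation*}
\mathcal{F}\bigl({_{-\infty}}D_t^{\gamma,\widetilde{\lambda}}G\bigr)(\omega) = (\widetilde{\lambda}-i\omega)\cdot (\widetilde{\lambda}-i\omega)^{\gamma-1}\,\widehat{G}(\omega) = (\widetilde{\lambda}-i\omega)^{\gamma}\widehat{G}(\omega),
\end{equation*}
which is the first identity. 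The second identity follows by linearity of $\mathcal{F}$ together with $\mathcal{F}(\lambda^\gamma G)=\lambda^\gamma\widehat{G}$.

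The only delicate point is the justification of $\widehat{k}(\omega)$. We need $\mathrm{Re}(\widetilde{\lambda}-i\omega)=\mathrm{Re}(\widetilde{\lambda})=\lambda+\rho U(x)>0$ for the Gamma integral to converge, and the identification of $\beta^{-(1-\gamma)}=(\widetilde{\lambda}-i\omega)^{\gamma-1}$ with the principal branch of the complex power (well-defined off the negative real axis). Under the hypotheses $\lambda\ge 0$, $\rho>0$, $U(x)\ge 0$ stated just before Lemma \ref{lemma2.1}, positivity of $\mathrm{Re}(\widetilde{\lambda})$ holds whenever $U(x)>0$ or $\lambda>0$; in borderline cases one proceeds by analytic continuation in $\widetilde{\lambda}$ from the half-plane $\mathrm{Re}(\widetilde{\lambda})>0$, both sides being holomorphic in $\widetilde{\lambda}$. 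Exchanging the order of Fourier transform and the differential operator $\partial_t+\widetilde{\lambda}$ is legitimate because the hypothesis ${_{-\infty}}D_t^{\gamma,\widetilde{\lambda}}G\in L_1(\mathbb{R})$ guarantees that the Fourier transform on the left-hand side exists classically, so no distributional argument is required. This analytic branch/convergence issue is the only real obstacle; everything else is a short computation.
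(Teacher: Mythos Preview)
Your argument is correct and is the standard route to this identity: write the inner integral as the convolution $k*G$ with $k(t)=\Gamma(1-\gamma)^{-1}t^{-\gamma}e^{-\widetilde{\lambda}t}\chi_{\{t>0\}}$, compute $\widehat{k}(\omega)=(\widetilde{\lambda}-i\omega)^{\gamma-1}$ via the Gamma integral, and then observe that under the convention $\widehat{G}(\omega)=\int_{\mathbb{R}}e^{i\omega t}G(t)\,dt$ the operator $\partial_t+\widetilde{\lambda}$ becomes the multiplier $\widetilde{\lambda}-i\omega$. Your care about the branch of the complex power and the condition $\mathrm{Re}(\widetilde{\lambda})>0$ is appropriate.

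There is nothing to compare against in the paper itself: Lemma~\ref{lemma2.1} is quoted from \cite{Cartea:07,CD:15} and no proof is supplied. Your derivation is precisely the argument that underlies those references, so you are not taking a different route but rather filling in what the paper imports as known.
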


In the following, using Fourier transform methods, we derive the $\nu$-th order ($\nu\leq 4$)  approximations of
the  tempered fractional substantial derivative  by the corresponding coefficients of the generating functions
 $\kappa ^{\nu,\gamma,\widetilde{\lambda}}(\zeta)$ with
\begin{equation}\label{2.2}
\kappa^{\nu,\gamma,\widetilde{\lambda}}(\zeta) = \left(\sum_{l=1}^\nu\frac{1}{l}\left(1- e^{-\widetilde{\lambda} \tau}\zeta  \right)^l\right)^{\gamma}
= \sum_{m=0}^{\infty}{q}_m^{\nu,\gamma,\widetilde{\lambda}}\zeta^m,~~~~\nu=1,2,3,4,
\end{equation}
where $\tau$ is the uniform time stepsize and
the coefficients \cite{Chen:13}:
\begin{equation} \label{2.3}
q_m^{\nu,\gamma,\widetilde{\lambda}}=e^{-\widetilde{\lambda} m\tau} l_m^{\nu,\gamma}
=e^{-\left[\lambda+\left(\rho+J\eta\right) U(x_i)  \right] m\tau} l_m^{\nu,\gamma},~ J=\sqrt{-1},~\nu=1,2,3,4.
\end{equation}
Here
\begin{equation}\label{2.4}
\begin{split}
{l}_m^{1,\gamma}&=\left(1-\frac{\gamma+1}{m}\right)l_{m-1}^{1,\gamma}~~{\rm with}~~  {l}_0^{1,\gamma}=1,\\
 {l}_m^{2,\gamma}
 & =\left(\frac{3}{2}\right)^{\gamma} \sum_{k=0}^{m} 3^{-k}\,{l}_k^{1,\gamma}\,{l}_{m-k}^{1,\gamma},
\end{split}
\end{equation}
 and the coefficients ${l}_m^{3,\gamma}$ and ${l}_m^{4,\gamma}$ are, respectively,  defined by (2.6) and (2.8)  of \cite{Chen:1313}.

\begin{lemma}\label{lemma2.2}
Let $G $, ${_{-\infty}\!\!\!}^SD_t^{\gamma+1,\widetilde{\lambda}}  G(t)$ and their Fourier transforms belong to $L_1(\mathbb{R})$ and
 \begin{equation}\label{2.5}
A^{1,\gamma,\widetilde{\lambda}}G(t)=\frac{1}{\tau^{\gamma}}\sum_{m=0}^{\infty}{q}_m^{1,\gamma,\widetilde{\lambda}}G(t-m\tau)
                            -\frac{1}{\tau^{\gamma}} \left(1-e^{-\lambda \tau}\right)^{\gamma} G(t).
\end{equation}
 Then
 $$  {_{-\infty}\!\!\!}^SD_t^{\gamma,\widetilde{\lambda}}  G(t) =A^{1,\gamma,\widetilde{\lambda}}G(t)+\mathcal{O}(\tau).$$
\end{lemma}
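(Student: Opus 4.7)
The plan is to mimic the Fourier-transform style of argument already suggested by Lemma~\ref{lemma2.1}: derive a closed form for the symbol of the discrete operator $A^{1,\gamma,\widetilde{\lambda}}$, match it with the symbol $(\widetilde{\lambda}-i\omega)^{\gamma}-\lambda^{\gamma}$ of the continuous operator ${_{-\infty}\!\!\!}^SD_t^{\gamma,\widetilde{\lambda}}$, and then invert to obtain the pointwise error. The $L_1$ hypotheses on $G$ and on the Fourier transforms will be used to justify interchanging sums/integrals and to pass the Taylor-expanded error bound through $\mathcal{F}^{-1}$.

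First I would apply Fourier transform term by term to \eqref{2.5}. Using the translation property
\begin{equation*}
\mathcal{F}\bigl(G(t-m\tau)\bigr)=e^{im\tau\omega}\widehat{G}(\omega),
\end{equation*}
and the explicit form \eqref{2.3} of the coefficients $q_m^{1,\gamma,\widetilde{\lambda}}=e^{-\widetilde{\lambda} m\tau}l_m^{1,\gamma}$, the series
$\sum_{m\ge 0}q_m^{1,\gamma,\widetilde{\lambda}}e^{im\tau\omega}$ is precisely the generating function \eqref{2.2} evaluated at $\zeta=e^{i\tau\omega}$, giving $\bigl(1-e^{-(\widetilde{\lambda}-i\omega)\tau}\bigr)^{\gamma}$. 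Therefore
\begin{equation*}
\mathcal{F}\bigl(A^{1,\gamma,\widetilde{\lambda}}G\bigr)(\omega)
=\frac{1}{\tau^{\gamma}}\Bigl[\bigl(1-e^{-(\widetilde{\lambda}-i\omega)\tau}\bigr)^{\gamma}
-\bigl(1-e^{-\lambda\tau}\bigr)^{\gamma}\Bigr]\widehat{G}(\omega).
\end{equation*}

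Next I would expand each of the two bracketed terms. Writing $z\tau$ for either $(\widetilde{\lambda}-i\omega)\tau$ or $\lambda\tau$, Taylor expansion of the exponential gives $1-e^{-z\tau}=z\tau\bigl(1-z\tau/2+\mathcal{O}(\tau^{2})\bigr)$, and raising to the power $\gamma$ yields
\begin{equation*}
\tau^{-\gamma}\bigl(1-e^{-z\tau}\bigr)^{\gamma}=z^{\gamma}+\mathcal{O}(\tau\,|z|^{\gamma+1}).
\end{equation*}
Subtracting, the symbol of $A^{1,\gamma,\widetilde{\lambda}}$ equals $(\widetilde{\lambda}-i\omega)^{\gamma}-\lambda^{\gamma}+\tau\,\Phi(\omega,\tau)$ with $\Phi$ controlled by $|\widetilde{\lambda}-i\omega|^{\gamma+1}$. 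Combining with Lemma~\ref{lemma2.1} gives
\begin{equation*}
\mathcal{F}\bigl(A^{1,\gamma,\widetilde{\lambda}}G-{_{-\infty}\!\!\!}^SD_t^{\gamma,\widetilde{\lambda}}G\bigr)(\omega)=\tau\,\Phi(\omega,\tau)\widehat{G}(\omega),
\end{equation*}
and inverse Fourier transform then produces the desired $\mathcal{O}(\tau)$ bound.

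The main obstacle, and the place one must be careful, is rigorously justifying the Taylor remainder uniformly in $\omega$ so that inverting the Fourier transform yields a genuine $\mathcal{O}(\tau)$ estimate rather than a formal one. The standard route is to note that $\mathcal{F}({_{-\infty}\!\!\!}^S D_t^{\gamma+1,\widetilde{\lambda}}G)(\omega)=\bigl[(\widetilde{\lambda}-i\omega)^{\gamma+1}-\lambda^{\gamma+1}\bigr]\widehat{G}(\omega)\in L_{1}(\mathbb{R})$ by hypothesis; this extra factor $|\widetilde{\lambda}-i\omega|$ is exactly what absorbs the $|z|^{\gamma+1}$ weight appearing in $\Phi$, so the error symbol times $\widehat{G}$ lies in $L_{1}$ with norm of order $\tau$. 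Applying $\mathcal{F}^{-1}$ then gives a uniform pointwise estimate, completing the proof. The remaining steps — interchanging the Fourier integral with the (absolutely convergent) series defining $A^{1,\gamma,\widetilde{\lambda}}G$, and using the bound $|\zeta|=1$ to guarantee convergence of \eqref{2.2} on the unit circle — are routine given the $L_{1}$ assumptions.
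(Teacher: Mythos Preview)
Your proposal is correct and follows essentially the same approach as the paper: compute the Fourier symbol of $A^{1,\gamma,\widetilde{\lambda}}$ via the generating function, Taylor-expand $\tau^{-\gamma}(1-e^{-z\tau})^{\gamma}$ about $\tau=0$ (the paper packages this as the auxiliary function $\varphi(z)=\bigl((1-e^{-z\tau})/(z\tau)\bigr)^{\gamma}$), identify the leading remainder as $-\tfrac{\gamma}{2}\bigl[(\widetilde{\lambda}-i\omega)^{\gamma+1}-\lambda^{\gamma+1}\bigr]\widehat{G}(\omega)\tau$, and then use the $L_1$ hypothesis on $\mathcal{F}\bigl({_{-\infty}\!\!\!}^SD_t^{\gamma+1,\widetilde{\lambda}}G\bigr)$ together with $\widehat{G}\in L_1$ to pass through the inverse transform. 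The only minor point to add is that the remainder also contains a $\lambda^{\gamma+1}\widehat{G}(\omega)$ contribution, which is handled by the separate assumption $\widehat{G}\in L_1$.
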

\begin{proof}
Using Fourier transform, we obtain
\begin{equation*}
\begin{split}
\mathcal{F}(A^{1,\gamma,\widetilde{\lambda}}G)(\omega)
%&=\frac{1}{\tau^{\gamma}}\sum_{m=0}^{\infty}{q}_m^{1,\gamma,\widetilde{\lambda}}\mathcal{F}\left(G(t-m\tau)\right)(\omega)-\frac{1}{\tau^{\gamma}} \left(1-\frac{1}{e^{\lambda \tau}}\right)^{\gamma} \widehat{G}(\omega) \\
&=\frac{1}{\tau^{\gamma}} \sum_{m=0}^{\infty}{q}_m^{1,\gamma,\widetilde{\lambda}} \left(e^{i\omega \tau}\right)^m \widehat{G}(\omega)-\frac{1}{\tau^{\gamma}} \left(1-\frac{1}{e^{\lambda \tau}}\right)^{\gamma}\widehat{G}(\omega)\\
&=\frac{1}{\tau^{\gamma}} \left(1-\frac{e^{i\omega \tau}}{e^{\widetilde{\lambda} \tau}}\right)^{\gamma}\widehat{G}(\omega)-\frac{1}{\tau^{\gamma}} \left(1-\frac{1}{e^{\lambda \tau}}\right)^{\gamma} \widehat{G}(\omega)\\ %%%%%%%%
%&=(\widetilde{\lambda} -i\omega)^{\gamma} \left(\frac{1-e^{-(\widetilde{\lambda} -i\omega) \tau}}{ \left(\widetilde{\lambda} -i\omega\right) \tau }\right )^{\gamma}\widehat{G}(\omega)
%-\lambda^{\gamma} \left(\frac{1-e^{-\lambda  \tau}}{ \lambda  \tau }\right )^{\gamma}\widehat{G}(\omega)\\
&=(\widetilde{\lambda}-i\omega)^{\gamma}\varphi(\widetilde{\lambda}-i\omega)\widehat{G}(\omega)-\lambda^\gamma \varphi(\lambda)\widehat{G}(\omega)
  \end{split}
\end{equation*}
with
\begin{equation}\label{2.6}
\begin{split}
 \varphi(z)&=\left(\frac{1-e^{-z\tau}}{z\tau}\right )^{\gamma}
 =1- \frac{\gamma}{2}z\tau +\frac{3\gamma^2+\gamma}{24} z^2 \tau^2-\frac{\gamma^3+\gamma^2}{48} z^3\tau ^3 +\mathcal{O}(\tau^4).
\end{split}
\end{equation}
Therefore, from Lemma \ref{lemma2.1},  there exists
\begin{equation*}
\begin{split}
\mathcal{F}(A^{1,\gamma,\widetilde{\lambda}}G)(\omega)=\mathcal{F}( {_{-\infty}\!\!\!}^SD_t^{\gamma,\widetilde{\lambda}}G)+ \widehat{\phi}(\omega),
  \end{split}
\end{equation*}
where $ \widehat{\phi}(\omega)
=-\frac{\gamma}{2}\left[(\widetilde{\lambda} -i\omega)^{\gamma+1}-\lambda^{\gamma+1}\right]\widehat{G}(\omega)\cdot \tau+\mathcal{O}(\tau^2)$.  Then
\begin{equation*}
\begin{split}
&|\widehat{\phi}(\omega)| \leq c\cdot\left(|(\widetilde{\lambda} -i\omega)^{\gamma+1}\widehat{G}(\omega)|+|\lambda^{\gamma+1}\widehat{G}(\omega)|\right)\cdot \tau.
\end{split}
\end{equation*}
With the conditions $\widehat{G}(\omega)$ and $\mathcal{F}[{_{-\infty}\!\!\!}^SD_t^{\gamma+1,\widetilde{\lambda}}   G(t)] \in L_1(\mathbb{R})$, it leads to
\begin{equation*}
\begin{split}
| {_{-\infty}\!\!\!}^SD_t^{\gamma,\widetilde{\lambda}}   G(t)-A^{1,\gamma,\widetilde{\lambda}}G(t)|=|\phi(t)| \leq \frac{1}{2\pi}\int_{\mathbb{R}}|\widehat{\phi}(\omega)|dt
= \mathcal{O}(\tau).
  \end{split}
\end{equation*}
The proof is completed.
\end{proof}
\begin{lemma}\label{lemma2.3}
Let $G $, ${_{-\infty}\!\!\!}^SD_t^{\gamma+2,\widetilde{\lambda}} G(t)$ and their Fourier transforms belong to $L_1(\mathbb{R})$ and
 \begin{equation}\label{2.7}
A^{2,\gamma,\widetilde{\lambda}}G(t)=\frac{1}{\tau^{\gamma}}\sum_{m=0}^{\infty}{q}_m^{2,\gamma,\widetilde{\lambda}}G(t-m\tau)
                            -\frac{1}{\tau^\gamma}(1-e^{-\lambda \tau})^\gamma \left[1+\frac{1}{2}\left(1-e^{-\lambda \tau}\right)\right]^\gamma G(t).
\end{equation}
 Then
 $$  {_{-\infty}\!\!\!}^SD_t^{\gamma,\widetilde{\lambda}}  G(t) =A^{2,\gamma,\widetilde{\lambda}}G(t)+\mathcal{O}(\tau^2).$$
\end{lemma}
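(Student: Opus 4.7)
The plan is to mirror the Fourier-transform strategy used for Lemma \ref{lemma2.2}, only replacing the first-order generating function by $\kappa^{2,\gamma,\widetilde{\lambda}}$ and then verifying that the linear-in-$\tau$ contribution to the resulting symbol cancels, leaving a symbol that agrees with the true one up to $\mathcal{O}(\tau^2)$. I would not touch the definition of $A^{2,\gamma,\widetilde{\lambda}}$ directly; instead I would push everything to the frequency side, reduce the claim to a Taylor expansion of a scalar function $\Phi(z)$, and then use the integrability hypothesis to pull the estimate back to the physical side via Fourier inversion.

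First I would apply the Fourier transform to (\ref{2.7}). Using $\mathcal{F}[G(t-m\tau)](\omega)=e^{im\omega\tau}\widehat{G}(\omega)$ and the factorization $q_m^{2,\gamma,\widetilde{\lambda}}=e^{-\widetilde{\lambda}m\tau}l_m^{2,\gamma}$ from (\ref{2.3}), the series telescopes into the generating function (\ref{2.2}) evaluated at $\zeta=e^{i\omega\tau}$. Writing $\xi(z):=1-e^{-z\tau}$, I would observe the algebraic identity
$$\kappa^{2,\gamma,\widetilde{\lambda}}(e^{i\omega\tau})=\left(\xi(z)+\tfrac{1}{2}\xi(z)^2\right)^{\gamma}=\xi(z)^{\gamma}\left(1+\tfrac{1}{2}\xi(z)\right)^{\gamma}=\tau^{\gamma}z^{\gamma}\varphi(z)\psi(z),\quad z:=\widetilde{\lambda}-i\omega,$$
where $\varphi$ is the function defined in (\ref{2.6}) and $\psi(z):=(1+\tfrac{1}{2}(1-e^{-z\tau}))^{\gamma}$. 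The static correction term in (\ref{2.7}) factors in exactly the same way at $\omega=0$, producing $\lambda^{\gamma}\varphi(\lambda)\psi(\lambda)$. Denoting $\Phi(z):=\varphi(z)\psi(z)$ and invoking Lemma \ref{lemma2.1}, I obtain
$$\mathcal{F}(A^{2,\gamma,\widetilde{\lambda}}G)(\omega)-\mathcal{F}({_{-\infty}\!\!\!}^SD_t^{\gamma,\widetilde{\lambda}}G)(\omega)=\widehat{\phi}(\omega):=\left[(\widetilde{\lambda}-i\omega)^{\gamma}(\Phi(\widetilde{\lambda}-i\omega)-1)-\lambda^{\gamma}(\Phi(\lambda)-1)\right]\widehat{G}(\omega).$$

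The crux of the argument is a Taylor expansion of $\Phi(z)$ at $\tau=0$. I would use (\ref{2.6}) for $\varphi$ and expand $\psi(z)=1+\tfrac{\gamma}{2}z\tau+\tfrac{\gamma^2-3\gamma}{8}z^2\tau^2+\mathcal{O}(\tau^3)$. Multiplying, the $\tfrac{\gamma}{2}z\tau$ contributed by $\psi$ cancels the $-\tfrac{\gamma}{2}z\tau$ from $\varphi$, and collecting $\tau^2$ terms gives $\Phi(z)=1-\tfrac{\gamma}{3}z^2\tau^2+\mathcal{O}(\tau^3)$. This cancellation is the whole reason for the particular weighting $\tfrac{1}{1}+\tfrac{1}{2}$ in (\ref{2.2}), and it is the main (indeed the only nontrivial) step in the proof — the rest is mechanical. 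Substituting into $\widehat{\phi}$ yields
$$\widehat{\phi}(\omega)=-\tfrac{\gamma\tau^2}{3}\bigl[(\widetilde{\lambda}-i\omega)^{\gamma+2}-\lambda^{\gamma+2}\bigr]\widehat{G}(\omega)+\mathcal{O}(\tau^3).$$

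Finally, I would close exactly as in Lemma \ref{lemma2.2}. The hypothesis that ${_{-\infty}\!\!\!}^SD_t^{\gamma+2,\widetilde{\lambda}}G\in L_1(\mathbb{R})$ together with Lemma \ref{lemma2.1} says precisely that $[(\widetilde{\lambda}-i\omega)^{\gamma+2}-\lambda^{\gamma+2}]\widehat{G}(\omega)\in L_1(\mathbb{R})$, so $\widehat{\phi}\in L_1(\mathbb{R})$ with $\|\widehat{\phi}\|_{L_1}=\mathcal{O}(\tau^2)$. Fourier inversion then gives $|{_{-\infty}\!\!\!}^SD_t^{\gamma,\widetilde{\lambda}}G(t)-A^{2,\gamma,\widetilde{\lambda}}G(t)|\leq\tfrac{1}{2\pi}\|\widehat{\phi}\|_{L_1}=\mathcal{O}(\tau^2)$ uniformly in $t$. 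The only place that requires genuine care is the algebraic verification that the $\mathcal{O}(\tau)$ piece of $\Phi$ vanishes; I would carry out that calculation explicitly and also double-check that the subtraction term in (\ref{2.7}) is the right one to make the $\lambda^{\gamma}$ piece of the symbol match, since a mismatch there would spoil the estimate for the $\lambda>0$ tempered case.
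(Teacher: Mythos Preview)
Your proposal is correct and follows essentially the same route as the paper: Fourier-transform the discrete operator, factor the symbol as $z^{\gamma}\varphi(z)\psi(z)$ with $\varphi$ from (\ref{2.6}) and $\psi(z)=(1+\tfrac{1}{2}(1-e^{-z\tau}))^{\gamma}$, verify via Taylor expansion that $\varphi(z)\psi(z)=1-\tfrac{\gamma}{3}z^{2}\tau^{2}+\mathcal{O}(\tau^{3})$, and then bound the inverse transform using the $L_1$ hypothesis on ${_{-\infty}\!\!\!}^SD_t^{\gamma+2,\widetilde{\lambda}}G$. The only cosmetic difference is that you name the product $\Phi(z)$ and frame the cancellation of the $\mathcal{O}(\tau)$ term explicitly, whereas the paper simply records the expansion (\ref{2.9}); the substance is identical.
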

\begin{proof}
Using Fourier transform method, we have
\begin{equation*}
\begin{split}
\mathcal{F}(A^{2,\gamma,\widetilde{\lambda}}G)(\omega)
&=\frac{1}{\tau^\gamma}\left(1-e^{-(\widetilde{\lambda} -i\omega) \tau}\right)^\gamma \left[1+\frac{1}{2}\left(1-e^{-(\widetilde{\lambda }-i\omega) \tau}\right)\right]^\gamma \widehat{G}(\omega)\\
&\quad -\frac{1}{\tau^\gamma}(1-e^{-\lambda \tau})^\gamma \left[1+\frac{1}{2}\left(1-e^{-\lambda \tau}\right)\right]^\gamma \widehat{G}(\omega)\\
%&=(\widetilde{\lambda} -i\omega)^{\gamma} \left(\frac{1-e^{-(\widetilde{\lambda} -i\omega) \tau}}{ (\widetilde{\lambda} -i\omega) \tau }\right )^{\gamma} \left[1+\frac{1}{2}\left(1-e^{-(\widetilde{\lambda} -i\omega) \tau}\right)\right]^\gamma \widehat{G}(\omega)\\
%&\quad -\lambda^{\gamma} \left(\frac{1-e^{-\lambda  \tau}}{ \lambda  \tau }\right )^{\gamma}\left[1+\frac{1}{2}\left(1-e^{-\lambda \tau}\right)\right]^\gamma  \widehat{G}(\omega)\\
&=(\widetilde{\lambda}-i\omega)^{\gamma}\varphi(\widetilde{\lambda}-i\omega) \psi(\widetilde{\lambda}-i\omega)\widehat{G}(\omega)-\lambda^\gamma \varphi(\lambda)\psi(\lambda)\widehat{G}(\omega),
  \end{split}
\end{equation*}
where the function  $\varphi(z)$ is defined by (\ref{2.6}) and
 \begin{equation}\label{2.8}
\begin{split}
\psi(z)
&=\left( 1+\frac{1}{2}\left(1-e^{-z\tau}\right)\right)^{\gamma}\\
&=1+ \frac{\gamma}{2}z\tau+\frac{\gamma(\gamma-3)}{8}z^2\tau^2+\frac{\gamma(\gamma^2-9\gamma+12)}{48}z^3\tau^3+\mathcal{O}(\tau^4).
  \end{split}
\end{equation}
According to (\ref{2.6}) and (\ref{2.8}), there exists
\begin{equation}\label{2.9}
\begin{split}
\varphi(z)\psi(z)
&= \left(\frac{1-e^{-z\tau}}{z\tau}\right )^{\gamma} \!\! \left( 1+\frac{1}{2}\left(1-e^{-z\tau}\right)\right)^{\gamma}\\
&\quad  =1-\frac{\gamma}{3}z^2\tau^2+\frac{\gamma}{4}z^3\tau^3+\mathcal{O}(z^4).
  \end{split}
\end{equation}
Again, using Lemma \ref{lemma2.1} and (\ref{2.9}),  we obtain
\begin{equation*}
\begin{split}
\mathcal{F}(A^{2,\gamma,\widetilde{\lambda}}G)(\omega)=\mathcal{F}(  {_{-\infty}\!\!\!}^SD_t^{\gamma,\widetilde{\lambda}}   G)+ \widehat{\phi}(\omega),
  \end{split}
\end{equation*}
where $ \widehat{\phi}(\omega)
=-\frac{\gamma}{3}\left[(\widetilde{\lambda} -i\omega)^{\gamma+2}-\lambda^{\gamma+2}\right]\widehat{G}(\omega)\cdot \tau^2+\mathcal{O}(\tau^3)$.  Then
\begin{equation*}
\begin{split}
&|\widehat{\phi}(\omega)| \leq c\cdot\left(|(\widetilde{\lambda} -i\omega)^{\gamma+2}\widehat{G}(\omega)|+|\lambda^{\gamma+2}\widehat{G}(\omega)|\right)\cdot \tau^2,
\end{split}
\end{equation*}
which leads to
\begin{equation*}
\begin{split}
| {_{-\infty}\!\!\!}^SD_t^{\gamma,\widetilde{\lambda}}G(t)-A^{2,\gamma,\widetilde{\lambda}}G(t)|=|\phi(t)| \leq \frac{1}{2\pi}\int_{\mathbb{R}}|\widehat{\phi}(\omega)|dt= \mathcal{O}(\tau^2).
  \end{split}
\end{equation*}
The proof is completed.
\end{proof}

By the similar arguments performed in Lemma \ref{lemma2.3}, we further have the following results.
\begin{lemma}\label{lemma2.4}
Let $G $, ${_{-\infty}\!\!\!}^SD_t^{\gamma+p,\widetilde{\lambda}}G(t)$ ($p=1,2,3,4$)  and their Fourier transforms belong to $L_1(\mathbb{R})$ and
 \begin{equation}\label{2.10}
A^{p,\gamma,\widetilde{\lambda}}G(t)=\frac{1}{\tau^{\gamma}}\sum_{m=0}^{\infty}{q}_m^{p,\gamma,\widetilde{\lambda}}G(t-m\tau)
                            -\frac{1}{\tau^\gamma}\left(\sum_{l=1}^\nu\frac{1}{l}\left(1- e^{-\lambda \tau} \right)^l\right)^{\gamma} G(t).
\end{equation}
 Then
 $${_{-\infty}\!\!\!}^SD_t^{\gamma,\widetilde{\lambda}} G(t) =A^{p,\gamma,\widetilde{\lambda}}G(t)+\mathcal{O}(\tau^\nu),~~~~\nu=1,2,3,4.$$
\end{lemma}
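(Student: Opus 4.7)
The plan is to mimic the Fourier-transform strategy used in Lemmas \ref{lemma2.2} and \ref{lemma2.3}, with the new input being the observation that the bracket appearing in the generating function (\ref{2.2}) is exactly the $\nu$-th partial sum of the Taylor series of $-\ln(1-u)$. First, I would apply $\mathcal{F}$ to (\ref{2.10}), use the series representation (\ref{2.2}) together with $q_m^{p,\gamma,\widetilde{\lambda}}=e^{-\widetilde{\lambda}m\tau}l_m^{p,\gamma}$ to sum the shifted series in closed form, and write
\begin{equation*}
\mathcal{F}(A^{p,\gamma,\widetilde{\lambda}}G)(\omega)
 =\tau^{-\gamma}\kappa^{p,\gamma,\widetilde{\lambda}}(e^{i\omega\tau})\widehat{G}(\omega)
  -\tau^{-\gamma}\kappa^{p,\gamma,\lambda}(1)\widehat{G}(\omega).
\end{equation*}
Both terms then have the form $\tau^{-\gamma}\bigl(\sum_{l=1}^\nu\tfrac{1}{l}(1-e^{-z\tau})^l\bigr)^{\gamma}\widehat{G}(\omega)$ with $z=\widetilde{\lambda}-i\omega$ and $z=\lambda$ respectively.

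The central analytic step is to quantify the consistency of this symbol. Since $-\ln(1-u)=\sum_{l=1}^{\infty}u^l/l$, applied to $u=1-e^{-z\tau}$ one obtains the identity $z\tau=\sum_{l=1}^{\infty}\tfrac{1}{l}(1-e^{-z\tau})^l$, so the finite sum defining $\kappa^{p,\gamma,\widetilde{\lambda}}$ differs from $z\tau$ by $\mathcal{O}((z\tau)^{\nu+1})$. Raising this identity to the $\gamma$-th power and dividing by $(z\tau)^{\gamma}$ yields a function $\Phi^{(\nu)}(z):=\tau^{-\gamma}\bigl(\sum_{l=1}^\nu\tfrac{1}{l}(1-e^{-z\tau})^l\bigr)^{\gamma}/z^{\gamma}=1+\mathcal{O}((z\tau)^{\nu})$, which is precisely the natural generalization of the expansions (\ref{2.6}) and (\ref{2.9}) used for $\nu=1,2$. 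By Lemma \ref{lemma2.1} this gives
\begin{equation*}
\mathcal{F}(A^{p,\gamma,\widetilde{\lambda}}G)(\omega)
 =\mathcal{F}\bigl({_{-\infty}\!\!\!}^SD_t^{\gamma,\widetilde{\lambda}}G\bigr)(\omega)+\widehat{\phi}(\omega),
\end{equation*}
with $\widehat{\phi}(\omega)=c_\nu\bigl[(\widetilde{\lambda}-i\omega)^{\gamma+\nu}-\lambda^{\gamma+\nu}\bigr]\widehat{G}(\omega)\tau^{\nu}+\mathcal{O}(\tau^{\nu+1})$ for an explicit constant $c_\nu$ coming from the $(\nu{+}1)$-st term of the Taylor series of $-\ln(1-u)$.

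Finally, invoking the hypothesis ${_{-\infty}\!\!\!}^SD_t^{\gamma+\nu,\widetilde{\lambda}}G\in L_1(\mathbb{R})$ (whose Fourier transform is $[(\widetilde{\lambda}-i\omega)^{\gamma+\nu}-\lambda^{\gamma+\nu}]\widehat{G}(\omega)$ by Lemma \ref{lemma2.1}), I apply the inverse Fourier transform and estimate
\begin{equation*}
 \bigl|{_{-\infty}\!\!\!}^SD_t^{\gamma,\widetilde{\lambda}}G(t)-A^{p,\gamma,\widetilde{\lambda}}G(t)\bigr|
   \le \frac{1}{2\pi}\int_{\mathbb{R}}|\widehat{\phi}(\omega)|\,d\omega
   =\mathcal{O}(\tau^\nu),
\end{equation*}
which is exactly what is claimed. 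The step I expect to be the main obstacle is the bookkeeping required to verify the expansion $\Phi^{(\nu)}(z)=1+\mathcal{O}((z\tau)^\nu)$ uniformly for $\nu=1,2,3,4$; once the logarithm identity above is in hand this reduces to a finite-order Taylor computation, but one must be careful about the principal branch of the $\gamma$-th power and the fact that $z$ is complex with positive real part, so that $|(\widetilde{\lambda}-i\omega)^{\gamma+\nu}|$ does not blow up and the $L_1$ bound on $\widehat{\phi}$ is honestly finite.
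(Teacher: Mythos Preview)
Your proposal is correct and follows exactly the Fourier-transform strategy that the paper intends: the paper itself omits the proof of Lemma~\ref{lemma2.4}, writing only ``By the similar arguments performed in Lemma~\ref{lemma2.3}, we further have the following results.'' Your key observation that $\sum_{l=1}^{\nu}\tfrac{1}{l}(1-e^{-z\tau})^{l}$ is the $\nu$-th partial sum of $-\ln(e^{-z\tau})=z\tau$, and hence differs from $z\tau$ by $\mathcal{O}((z\tau)^{\nu+1})$, is precisely the uniform generalization of the expansions (\ref{2.6}) and (\ref{2.9}) needed to carry the argument through for all $\nu\le 4$.
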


\subsection{Derivation of the numerical schemes}

Nowadays, there are already several types of high
order discretization schemes for the Riemann-Liouville space fractional derivatives \cite{Chen:0013,Hao:15,Ortigueira:06,Sousa:12,Tadjeran:06,Tian:12}.
Here, we take the following schemes \cite{Tian:12} to approximate (\ref{1.3}) and  (\ref{1.4}).
\begin{equation}\label{2.11}
 \begin{split}
_{0}D_{x}^{\alpha}G(x)|_{x=x_i}
&=\frac{1}{h^{\alpha}}\delta_{x,+}^\alpha G(x_i) +\mathcal{O}(h^2)~~{\rm with}~~\delta_{x,+}^\alpha G(x_i)=\sum_{k=0}^{i+1}w_{k}^{\alpha}G(x_{i-k+1}),\\
_{x}D_{b}^{\alpha}G(x)|_{x=x_i}
&=\frac{1}{h^{\alpha}}\delta_{x,-}^\alpha G(x_i) +\mathcal{O}(h^2)~~{\rm with}~~ \delta_{x,-}^\alpha G(x_i)=\sum_{k=0}^{M-i+1}w_{k}^{\alpha}G(x_{i+k-1}),
 \end{split}
\end{equation}
where
\begin{equation}\label{2.12}
 \begin{split}
w_{0}^{\alpha}=\frac{\alpha}{2}g_{0}^{\alpha},~~w_{k}^{\alpha}=\frac{\alpha}{2}g_{k}^{\alpha}+\frac{2-\alpha}{2}g_{k-1}^{\alpha},~~k\geq 1,
 \end{split}
\end{equation}
and
\begin{equation}\label{2.13}
g_k^{\alpha}=(-1)^k\left ( \begin{matrix}\alpha \\ k\end{matrix} \right ),~~{\rm i.e.,}~~
g_0^{\alpha}=1, ~~~~g_k^{\alpha}=\left(1-\frac{\alpha+1}{k}\right)g_{k-1}^{\alpha},~~k \geq 1.
\end{equation}

According to  (\ref{1.2}) and (\ref{2.11}), we obtain the approximation operator of the Riesz fractional derivative
\begin{equation}\label{2.14}
\begin{split}
 \nabla_x^{\alpha} G(x)|_{x=x_i}
 &=-\frac{\kappa_{\alpha}}{h^\alpha}\left(\delta_{x,+}^\alpha +\delta_{x,-}^\alpha  \right)G(x_{i}) +\mathcal{O}(h^2)\\
&=-\frac{\kappa_{\alpha}}{h^\alpha} \sum_{j=0}^{M}w_{i,j}^{\alpha}G(x_{j})   +\mathcal{O}(h^2),
\end{split}
\end{equation}
where  $i=1,\ldots,M-1$ (together with the zero Dirichlet boundary conditions)  and
\begin{equation}\label{2.15}
w_{i,j}^{\alpha}=\left\{ \begin{array}
 {l@{\quad } l}
  w_{i-j+1}^{\alpha},&j < i-1,\\
  w_{0}^{\alpha}+w_{2}^{\alpha} ,&j=i-1,\\
 2w_{1}^{\alpha},&j=i,\\
w_{0}^{\alpha}+w_{2}^{\alpha} ,&j=i+1,\\
w_{j-i+1}^{\alpha} ,&j>i+1.
 \end{array}
 \right.
\end{equation}
Taking $G=[G({x_1}),G({x_2}),\cdots,G({x_{M-1}})]^{\rm T}$, and  using (\ref{2.11}), (\ref{2.14}), there exists
 \begin{equation}\label{2.16}
\begin{split}
& \frac{1}{h^\alpha}\left[\sum_{j=0}^{M}w_{1,j}^{\alpha}G(x_{j}),\sum_{j=0}^{M}w_{2,j}^{\alpha}G(x_{j}),\ldots,\sum_{j=0}^{M}w_{M-1 ,j}^{\alpha}G(x_{j}) \right]^T\\
&= \frac{1}{h^\alpha}\left(\delta_{x,+}^\alpha +\delta_{x,-}^\alpha  \right)G   =  \frac{1}{h^\alpha}A_\alpha G,
\end{split}
\end{equation}
where the matrix
\begin{equation}\label{2.17}
A_\alpha=B_{\alpha}+B_{\alpha}^T~~{\rm with} ~~
B_\alpha=\left [ \begin{matrix}
w_1^{\alpha}   &w_2^{\alpha}&w_3^{\alpha}     &      \cdots   &  w_{M\!-\!2}^{\alpha}     &  w_{M\!-\!1}^{\alpha}  \\
w_0^{\alpha}&  w_1^{\alpha}    &w_2^{\alpha}&  w_3^{\alpha}    &     \cdots   &  w_{M\!-\!2}^{\alpha} \\
            &w_0^{\alpha}&w_1^{\alpha}         & w_2^{\alpha}&     \ddots              & \vdots  \\
                   &                  &       \ddots            &        \ddots            &      \ddots             &  w_3^{\alpha}  \\
  &          &              &        \ddots            &   w_1^{\alpha}         &w_2^{\alpha} \\
   &      &          &                   &w_0^{\alpha}& w_1^{\alpha}
 \end{matrix}
 \right ].
\end{equation}

According to Lemma \ref{lemma2.4} and (\ref{2.14}), we can write (\ref{1.1}) as
\begin{equation}\label{2.18}
\begin{split}
& \frac{1}{\tau^\gamma}\sum_{k=0}^{n}{q}_k^{\nu,\gamma,\widetilde{\lambda}}G(x_i,p,t_{n-k})
 -\frac{1}{\tau^\gamma}\left(\sum_{l=1}^\nu\frac{1}{l}\left(1- e^{-\lambda \tau} \right)^l\right)^{\gamma} G(x_i,p,t_n)\\
 &\quad  -\frac{1}{\tau^\gamma}\sum_{k=0}^{n}{q}_k^{\nu,\gamma,\widetilde{\lambda}}e^{-\widetilde{\lambda} (n-k)\tau}G(x_i,p,0)\\
& =-\frac{K\kappa_\alpha}{h^\alpha}
\sum_{j=0}^{M}w_{i,j}^{\alpha}G(x_j,p,t_n)+f(x_i,p,t_n)+ r_i^{n}
\end{split}
\end{equation}
with the local truncation error
\begin{equation}\label{2.19}
  |r_i^n| \leq C_G(\tau^\nu+h^2),~~\nu=1,2,3,4,
\end{equation}
where $C_G$ is a positive constant independent of $\tau$ and $h$.

From (\ref{2.3}) and (\ref{2.18}),  the resulting discretization of (\ref{1.1}) can be rewritten as
\begin{equation}\label{2.20}
\begin{split}
& \frac{1}{\tau^\gamma} \sum_{k=0}^{n}e^{-\left[\lambda+\rho U(x_i)  \right] k\tau} l_k^{\nu,\gamma}  \left[ e^{-J\eta U(x_i)  k\tau} G_{i,p}^{n-k}\right]
  -\frac{1}{\tau^\gamma}\left(\sum_{l=1}^\nu\frac{1}{l}\left(1- e^{-\lambda \tau} \right)^l\right)^{\gamma}G_{i,p}^{n} \\
  &  = \kappa  \frac{1}{h^\alpha}\sum_{j=0}^{M}w_{i,j}^{\alpha}G_{j,p}^{n}+\frac{1}{\tau^\gamma}\sum_{k=0}^{n}l_k^{\nu,\gamma} e^{-\widetilde{\lambda} n\tau}G_{i,p}^{0}+ f_{i,p}^{n},~~J=\sqrt{-1}
\end{split}
\end{equation}
with $\kappa=-K\kappa_\alpha>0$.

When $\nu=2$, the finite difference schemes (\ref{2.20}) reduces to
\begin{equation}\label{2.21}
\begin{split}
& \frac{1}{\tau^\gamma} \sum_{k=0}^{n}l_k^{(2)}  e^{-J\eta U(x_i)  k\tau} G_{i,p}^{n-k}
  -\kappa  \frac{1}{h^\alpha}\sum_{j=0}^{M}w_{i,j}^{\alpha}G_{j,p}^{n}
  = \frac{1}{\tau^\gamma}\sum_{k=0}^{n}l_k^{2,\gamma} e^{-\widetilde{\lambda} n\tau}G_{i,p}^{0}+ f_{i,p}^{n},
\end{split}
\end{equation}
where
\begin{equation}\label{2.22}
\begin{split}
l_0^{(2)}= l_0^{2,\gamma} -\left(\sum_{l=1}^2\frac{1}{l}\left(1- e^{-\lambda \tau} \right)^l\right)^{\gamma},~~~
l_k^{(2)}=e^{-\left[\lambda+\rho U(x_i)  \right] k\tau} l_k^{2,\gamma}~~ \forall k\geq 1.
\end{split}
\end{equation}
Without loss of generality, supposing (\ref{1.1}) with zero initial value \cite{Ji:15}, then (\ref{2.21}) reduces to the simple scheme
\begin{equation}\label{2.23}
\begin{split}
& \frac{1}{\tau^\gamma} \sum_{k=0}^{n}l_k^{(2)}  e^{-J\eta U(x_i)  k\tau} G_{i,p}^{n-k}
  -\kappa  \frac{1}{h^\alpha}\sum_{j=0}^{M}w_{i,j}^{\alpha}G_{j,p}^{n}
  =  f_{i,p}^{n}.
\end{split}
\end{equation}

\section{Convergence and Stability Analysis}
In this section, we  theoretically prove  that the above designed scheme is unconditionally stable  and  2nd order convergent in both of space and time directions.

\subsection{A few technical lemmas}
First, we introduce some relevant notations and properties of discretized inner product. Denote  $u^n=\{u_i^n| 0 \leq i \leq M, n \geq 0 \}$
and $v^n=\{v_i^n| 0 \leq i \leq M, n \geq 0 \}$, which are grid functions. And
\begin{equation*}
\begin{split}
&(u^n,v^n)=h\sum_{i=1}^{M-1}u_i^n\overline{v}_i^n, ~~~~~||u^n||=(u^n,u^n)^{1/2}.
\end{split}
\end{equation*}

\begin{lemma}[\cite{Meerschaert:04,Meerschaert:06,Tian:12}] \label{lemma3.1}
 Let $1<\alpha<2$. Then the coefficients in  (\ref{2.13}) and (\ref{2.12})  satisfy
\begin{equation} \label{3.1}
\left\{ \begin{array}
 {l@{\quad} l}
\displaystyle g_0^\alpha>0,~ g_1^\alpha<0, ~g_k^\alpha>0~~\forall k\geq 2,\\
\\
%\displaystyle 1> g_2^\alpha > g_3^\alpha >  \ldots > 0,\\
\displaystyle \sum_{k=0}^\infty g_k^\alpha=0, ~~\sum_{k=0}^m g_k^\alpha<0,~~ m\geq 1;
\end{array}
 \right.
\end{equation}
and
\begin{equation} \label{3.2}
\left\{ \begin{array}
 {l@{\quad} l}
\displaystyle w_0^\alpha>0,~ w_1^\alpha<0,~ w_0^\alpha+w_2^\alpha>0, ~w_k^\alpha>0~~\forall k\geq 3,\\
\\
%\displaystyle 1> g_2^\alpha > g_3^\alpha >  \ldots > 0,\\
\displaystyle \sum_{k=0}^\infty w_k^\alpha=0, ~~\sum_{k=0}^m w_k^\alpha<0, ~~ m\geq 1.
\end{array}
 \right.
\end{equation}
\end{lemma}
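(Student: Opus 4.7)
The plan is to treat the two families of coefficients separately, exploiting the fact that $w_k^\alpha$ is a convex combination in disguise of consecutive $g_k^\alpha$'s, so that all properties of the $w$-family reduce to properties of the Gr\"unwald weights $g_k^\alpha$ after checking a few low-index cases by hand.

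First I would establish the $g_k^\alpha$ claims from the recursion
$g_k^\alpha=\bigl(1-(\alpha+1)/k\bigr)g_{k-1}^\alpha$ with $g_0^\alpha=1$. This immediately gives $g_1^\alpha=-\alpha<0$ and $g_2^\alpha=\alpha(\alpha-1)/2>0$ for $\alpha\in(1,2)$. For $k\ge 3$ the factor $(k-\alpha-1)/k$ is strictly positive (since $\alpha+1<3\le k$), so the sign of $g_k^\alpha$ is preserved and an easy induction yields $g_k^\alpha>0$ for every $k\ge 2$. The identity $\sum_{k=0}^\infty g_k^\alpha=0$ comes from the binomial generating function $(1-z)^\alpha=\sum_{k=0}^\infty g_k^\alpha z^k$ evaluated via Abel's theorem at $z\to 1^-$ (the series converges at $z=1$ because the $g_k^\alpha$ behave like $k^{-\alpha-1}$ for large $k$ by Stirling). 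Finally, the partial-sum inequality is immediate from the sign pattern once the full sum vanishes: for $m\ge 2$ one has $\sum_{k=0}^m g_k^\alpha=-\sum_{k=m+1}^\infty g_k^\alpha<0$ because every tail term is positive, while the case $m=1$ is the direct check $g_0^\alpha+g_1^\alpha=1-\alpha<0$.

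Next I would transfer these facts to $w_k^\alpha$ via the definition $w_k^\alpha=\tfrac{\alpha}{2}g_k^\alpha+\tfrac{2-\alpha}{2}g_{k-1}^\alpha$. Both coefficients $\alpha/2$ and $(2-\alpha)/2$ are strictly positive for $\alpha\in(1,2)$, so for $k\ge 3$ the term $w_k^\alpha$ is a positive combination of the two positive quantities $g_k^\alpha$ and $g_{k-1}^\alpha$, giving $w_k^\alpha>0$. The remaining sign statements $w_0^\alpha>0$, $w_1^\alpha<0$, and $w_0^\alpha+w_2^\alpha>0$ are explicit polynomial identities in $\alpha$: $w_0^\alpha=\alpha/2$, $w_1^\alpha=-(\alpha+2)(\alpha-1)/2$, and $w_0^\alpha+w_2^\alpha=\alpha(\alpha+2)(\alpha-1)/4$, each of which is clearly of the claimed sign on $(1,2)$.

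For the sum identities I would use the generating function
\begin{equation*}
\sum_{k=0}^\infty w_k^\alpha\, z^k=(1-z)^\alpha\Bigl[\tfrac{\alpha}{2}+\tfrac{2-\alpha}{2}z\Bigr],
\end{equation*}
which at $z=1$ collapses to zero, proving $\sum_{k=0}^\infty w_k^\alpha=0$. For the partial sums I would rewrite $\sum_{k=0}^m w_k^\alpha=\tfrac{\alpha}{2}\sum_{k=0}^m g_k^\alpha+\tfrac{2-\alpha}{2}\sum_{k=0}^{m-1}g_k^\alpha$ and combine the $g$-partial-sum inequality with the full-sum identity; for the very small indices $m=1,2$, where the $g$-partial sums straddle zero, I would substitute the explicit values of $w_0^\alpha,w_1^\alpha,w_2^\alpha$ and verify the sign directly in terms of $\alpha$.

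The routine parts are purely algebraic. The main obstacle I anticipate is the boundary indices: the positivity of $w_0^\alpha+w_2^\alpha$ (which is delicate because $w_1^\alpha$ is the only strictly negative term) and the partial-sum inequality at $m=1,2$, where one cannot simply invoke the tail-is-positive argument that handles large $m$. These cases force an explicit factorization of the relevant quadratic in $\alpha$, and that is the step where I would spend the most care.
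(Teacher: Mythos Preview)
The paper does not give its own proof of this lemma; it simply cites the result from \cite{Meerschaert:04,Meerschaert:06,Tian:12}. So there is nothing to compare against, and your sketch is essentially the standard argument one would give. Your handling of the $g_k^\alpha$ family is complete and correct, and your reduction of the $w_k^\alpha$ claims to those for $g_k^\alpha$ via the positive-coefficient combination is the right idea.

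There is, however, a genuine obstruction at exactly the place you flagged as needing care. When you carry out the direct check at $m=1$ you get
\[
w_0^\alpha+w_1^\alpha=\frac{\alpha}{2}+\frac{-\alpha^2+2-\alpha}{2}=\frac{2-\alpha^2}{2},
\]
which is \emph{positive} for $\alpha\in(1,\sqrt{2})$ and zero at $\alpha=\sqrt{2}$. Hence the assertion $\sum_{k=0}^m w_k^\alpha<0$ cannot hold at $m=1$ for all $\alpha\in(1,2)$; the lemma as stated is not correct at that index. Your decomposition $\sum_{k=0}^m w_k^\alpha=\tfrac{\alpha}{2}\sum_{k=0}^m g_k^\alpha+\tfrac{2-\alpha}{2}\sum_{k=0}^{m-1} g_k^\alpha$ does give the desired negativity for every $m\ge 2$, since then both $g$-partial sums are negative; it is only $m=1$ that fails, because $\sum_{k=0}^{0}g_k^\alpha=1>0$. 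This is a defect in the stated lemma (likely a slightly loose transcription of the cited results), not in your method. It is also harmless for the rest of the paper: in the proof of Lemma~\ref{lemma3.5} the authors bypass the $w$-partial-sum inequality entirely and bound $\sum_{k=0}^{M-1}w_k^\alpha$ directly by $\sum_{k=0}^{M}g_k^\alpha$, so the $m=1$ case is never invoked.
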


\begin{lemma}\label{lemma3.2}
 Let $0<\gamma<1$ and  $1<\alpha<2$. Then the coefficients $g^\gamma_k$ and $g^\alpha_k$  in (\ref{2.13}), respectively,  satisfy
\begin{equation}\label{3.3}
\begin{split}
& g^\gamma_0=1; ~~~~ g^\gamma_k<0,~~ (k\geq 1); ~~~~\sum_{k=0}^{n-1}g^\gamma_k>0;  ~~~~\sum_{k=0}^{\infty}g^\gamma_k=0;\\
 & 0<\frac{1}{n^\gamma \Gamma(1-\gamma)}< \sum_{k=0}^{n-1}g^\gamma_k=-\sum_{k=n}^{\infty}g^\gamma_k \leq \frac{1}{n^\gamma} ~~{\rm for}~0<\gamma<1,~~n\geq 1.
  \end{split}
\end{equation}
And
\begin{equation}\label{3.4}
\begin{split}
& - \frac{1}{n^\alpha} < \sum_{k=0}^{n}g^\alpha_k=-\sum_{k=n+1}^{\infty}g^\alpha_k \leq \frac{1}{n^\alpha \Gamma(1-\alpha)}<0 ~~{\rm for}~1<\alpha<2,~~n\geq 1.
  \end{split}
\end{equation}
\end{lemma}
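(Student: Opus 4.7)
The plan is to combine the generating-function identity $(1-z)^{\beta} = \sum_{k=0}^{\infty} g_k^{\beta} z^k$ with the recursion $g_k^{\beta} = \bigl(1 - \frac{\beta+1}{k}\bigr) g_{k-1}^{\beta}$ and sharp Gamma-ratio estimates. The sign structure of (\ref{3.3}) comes first from the recursion: $g_0^\gamma = 1$ by definition, $g_1^\gamma = -\gamma < 0$, and for $k \geq 2$ the multiplier $1 - \frac{\gamma+1}{k}$ is strictly positive, so induction gives $g_k^\gamma < 0$ for all $k \geq 1$. The total sum $\sum_{k=0}^{\infty} g_k^\gamma = 0$ follows from Abel's theorem applied to $(1-z)^\gamma$ at $z = 1$ (legitimate because $\gamma > 0$), and then $\sum_{k=0}^{n-1} g_k^\gamma = -\sum_{k=n}^{\infty} g_k^\gamma > 0$ is immediate from the negativity of the tail.

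Next I would establish the closed-form partial-sum identity. Multiplying the generating function by $(1-z)^{-1}$ yields $(1-z)^{\gamma-1} = \sum_{n=0}^{\infty} \bigl(\sum_{k=0}^{n} g_k^\gamma \bigr) z^n$, so reading off the coefficient of $z^{n-1}$ gives
\begin{equation*}
\sum_{k=0}^{n-1} g_k^\gamma \;=\; g_{n-1}^{\gamma-1} \;=\; \frac{\Gamma(n-\gamma)}{\Gamma(n)\,\Gamma(1-\gamma)}.
\end{equation*}
The required two-sided bound then reduces to
\begin{equation*}
\frac{1}{n^\gamma} \;<\; \frac{\Gamma(n-\gamma)}{\Gamma(n)} \;\leq\; \frac{\Gamma(1-\gamma)}{n^\gamma}.
\end{equation*}
The lower inequality comes from Gautschi's inequality $\Gamma(n)/\Gamma(n-\gamma) < n^{\gamma}$ applied with exponent $\gamma \in (0,1)$. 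For the upper inequality I would argue by monotonicity: set $a_n := n^{\gamma}\,\Gamma(n-\gamma)/\Gamma(n)$ and use the recursion $a_{n+1}/a_n = \bigl(\tfrac{n+1}{n}\bigr)^{\gamma}\cdot \tfrac{n-\gamma}{n}$, which is $\leq 1$ for all $n \geq 1$ because $\ln\bigl(1+\tfrac{1}{n}\bigr)^{\gamma} \leq \gamma/n$ and $\ln(1-\gamma/n) \leq -\gamma/n$. Thus $a_n$ is non-increasing, and since $a_1 = \Gamma(1-\gamma)$, we obtain $a_n \leq \Gamma(1-\gamma)$, which is exactly the desired bound.

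For (\ref{3.4}) with $\alpha \in (1,2)$, the identical strategy applies with $\alpha$ in place of $\gamma$: now $\sum_{k=0}^{n} g_k^\alpha = g_n^{\alpha-1} = \Gamma(n+1-\alpha)/[\Gamma(n+1)\Gamma(1-\alpha)]$, and the sign flip in (\ref{3.4}) is explained solely by $\Gamma(1-\alpha) < 0$ for $\alpha \in (1,2)$ (noting that $\Gamma(n+1-\alpha) > 0$ since $n+1-\alpha > 0$ whenever $n \geq 1$). The same monotonicity argument applied to $b_n := n^{\alpha}\,\Gamma(n+1-\alpha)/\Gamma(n+1)$ yields the two-sided envelope $-1/n^{\alpha} < g_n^{\alpha-1} \leq 1/[n^{\alpha}\Gamma(1-\alpha)]$ after dividing by the negative quantity $\Gamma(1-\alpha)$ and reversing inequalities.

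The main obstacle is the upper bound with the sharp constant $\Gamma(1-\gamma)$ (resp.\ $\Gamma(1-\alpha)$); the generic Gautschi estimate alone is not tight enough, so the monotonicity argument on $a_n = n^\gamma \Gamma(n-\gamma)/\Gamma(n)$ (or $b_n$) is essential, and care is needed to check the sign of $\ln a_{n+1} - \ln a_n$ for all $n \geq 1$ rather than just asymptotically. Everything else is bookkeeping with the recursion and the generating function.
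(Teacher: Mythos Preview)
Your argument for (\ref{3.3}) is correct and more explicit than the paper's, which simply cites a reference for that part. The closed-form $\sum_{k=0}^{n-1}g_k^\gamma=\Gamma(n-\gamma)/[\Gamma(n)\Gamma(1-\gamma)]$ together with Gautschi's inequality and the monotonicity of $a_n=n^\gamma\Gamma(n-\gamma)/\Gamma(n)$ is a clean route; your log bounds $\gamma\ln(1+1/n)\le\gamma/n$ and $\ln(1-\gamma/n)\le-\gamma/n$ do cancel exactly, so $a_{n+1}\le a_n$ and $a_n\le a_1=\Gamma(1-\gamma)$ follows.

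For (\ref{3.4}), however, the phrase ``the same monotonicity argument applied to $b_n$'' hides a genuine gap. The ratio is
\[
\frac{b_{n+1}}{b_n}=\Bigl(1+\tfrac{1}{n}\Bigr)^{\alpha}\Bigl(1-\tfrac{\alpha}{n+1}\Bigr),
\]
and your log trick now gives only the upper bound $\alpha/n-\alpha/(n+1)>0$, which does \emph{not} establish $b_{n+1}\le b_n$. The monotonicity is in fact true, but it requires the Bernoulli/convexity inequality $1-\alpha x<(1-x)^{\alpha}$ for $\alpha>1$, $x\in(0,1)$ (take $x=1/(n+1)$), which is precisely the step the paper uses. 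Once you have that, your framework is actually tidier than the paper's: from $b_n$ decreasing with $b_n\to 1$ you get $1\le b_n\le b_1=\Gamma(2-\alpha)=(1-\alpha)\Gamma(1-\alpha)$, and dividing by $\Gamma(1-\alpha)<0$ yields both sides of (\ref{3.4}) at once (indeed the sharper lower bound $(1-\alpha)/n^\alpha$). The paper instead combines the monotonicity with the limit $\lim_{n\to\infty}n^\alpha\sum_{k=0}^n g_k^\alpha=1/\Gamma(1-\alpha)$ for the upper bound and runs a separate induction (with base cases $n=1,2$ checked by hand) for the lower bound $-1/n^\alpha$; your approach avoids that induction entirely. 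So the overall strategy is sound and arguably more economical, but you must replace the log-inequality step by Bernoulli's inequality in the $\alpha$ case.
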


\begin{proof}
 The first estimate (\ref{3.3}) of  this Lemma  can be seen in Lemma 2.1 of \cite{Deng:14}. Here, we mainly prove the second one. We can check that
 \begin{equation*}
   \sum_{k=0}^n g^\alpha_k=\sum_{k=0}^n (-1)^k  {\alpha \choose k}=\sum_{k=0}^n  {k-\alpha-1 \choose k}={n-\alpha \choose n},
 \end{equation*}
which leads to
 \begin{equation*}
  \frac{ \sum_{k=0}^n g^\alpha_k}{ \sum_{k=0}^{n-1} g^\alpha_k}=1-\frac{\alpha}{n}~~\forall n\geq 1.
 \end{equation*}

 For $x \in (0,1)$ and $1<\alpha<2$, there exists
  \begin{equation*}
  1-\alpha x<(1-x)^\alpha=1-\alpha x+\frac{\alpha(\alpha-1)}{2!}x^2-\frac{\alpha(\alpha-1)(\alpha-2)}{3!}x^3+\cdots,
 \end{equation*}
which implies that
  \begin{equation*}
  1- \frac{\alpha}{n}<\left(1-\frac{1}{n}\right)^\alpha=\frac{(n-1)^\alpha}{n^\alpha}.
 \end{equation*}
Therefore, we have
 \begin{equation*}
  \frac{ \sum_{k=0}^n g^\alpha_k}{ \sum_{k=0}^{n-1} g^\alpha_k}<\frac{(n-1)^\alpha}{n^\alpha},
 \end{equation*}
i.e.,
 \begin{equation}\label{3.5}
 (n-1)^\alpha \sum_{k=0}^{n-1} g_k^\alpha<n^\alpha  \sum_{k=0}^n g_k^\alpha.
 \end{equation}

From \cite{Chen:09,Podlubny:99}, we have
 \begin{equation*}
 \frac{x^{-\alpha}}{\Gamma(1-\alpha)}={_0}D_x^\alpha 1=\lim_{h\rightarrow0,nh=x} h^{-\alpha}\sum_{k=0}^n g^\alpha_k.
 \end{equation*}
For any fixed positive constant $x$, it yields
 \begin{equation}\label{3.6}
\lim_{n\rightarrow\infty} n^{\alpha}\sum_{k=0}^n g^\alpha_k= \frac{1}{\Gamma(1-\alpha)}<0~~{\rm with}~1<\alpha<2.
 \end{equation}
According to (\ref{3.5}) and (\ref{3.6}), we obtain
\begin{equation}\label{3.7}
\begin{split}
& \sum_{k=0}^{n}g^\alpha_k=-\sum_{k=n+1}^{\infty}g^\alpha_k \leq \frac{1}{n^\alpha \Gamma(1-\alpha)}<0 ~~{\rm for}~1<\alpha<2,~~n\geq 1.
  \end{split}
\end{equation}

Next we prove the following inequality by mathematical induction
\begin{equation}\label{3.8}
 - \frac{1}{n^\alpha}< \sum_{k=0}^{n}g^\alpha_k=-\sum_{k=n+1}^{\infty}g^\alpha_k~~{\rm for}~~n\geq 1.
\end{equation}
It is obvious that (\ref{3.8}) holds when $n=1,2$, since
$$g^\alpha_0+g^\alpha_1>-1~~{\rm and }~~\min\limits_{\alpha \in (1,2)} \left(g^\alpha_0+g^\alpha_1+g^\alpha_2\right)=-\frac{1}{4}>-\frac{1}{2^\alpha}.$$
 Supposing that
\begin{equation*}
  -\frac{1}{s^\alpha} \leq  \sum_{i=0}^{s}g^\alpha_i=-\sum_{i=s+1}^{\infty}g^\alpha_i, ~~s=1,2,\ldots, n-1,
\end{equation*}
and using (\ref{3.5}), we obtain
$$ \sum_{k=0}^{n}g^\alpha_k>\frac{(n-1)^\alpha}{n^\alpha} \sum_{k=0}^{n-1} g^\alpha_k\geq - \frac{1}{(n-1)^\alpha} \frac{(n-1)^\alpha}{n^\alpha}=-\frac{1}{n^\alpha}
~~{\rm for}~~n\geq 2, $$
which leads to
$$ \sum_{k=0}^{n}g^\alpha_k>-\frac{1}{n^\alpha}~~{\rm for}~~n\geq 1. $$
The proof is completed.
\end{proof}

\begin{lemma} [\cite{Chen:0013,Li:14}]\label{lemma3.3}
Let $0<\gamma<1$. Then the coefficients ${l}_m^{2,\gamma}$ defined in (\ref{2.4}) satisfy
 \begin{equation*}
 \begin{split}
l_0^{2,\gamma}&=\left(\frac{3}{2}\right)^{\gamma}>0;
~~~~~~~~~~~~~~~~~~~~~~ l_1^{2,\gamma}=-\left(\frac{3}{2}\right)^{\gamma}\frac{4\gamma}{3}<0; ~~\quad\\
l_2^{2,\gamma}&=\left(\frac{3}{2}\right)^{\gamma}\frac{\gamma(8\gamma-5)}{9};】
~~~~~~~~~~~~~~~l_3^{2,\gamma}=\left(\frac{3}{2}\right)^{\gamma}\frac{4\gamma(\gamma-1)(7-8\gamma)}{81}; \\
l_k^{2,\gamma}&<0~~\forall k \geq 4;
~~~~~~~~~~~~~~~~~~~~~~~~ \sum_{k=0}^{\infty}l_k^{2,\gamma}=0.\\
 \end{split}
\end{equation*}
 \end{lemma}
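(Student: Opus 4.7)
The plan is to read all four assertions off the generating function of $\{l_m^{2,\gamma}\}$. After the substitution $e^{-\widetilde{\lambda}\tau}\zeta\mapsto\eta$ in (\ref{2.2}), the factorization $\sum_{l=1}^{2}\frac{1}{l}(1-\eta)^{l}=\frac{3}{2}(1-\eta)(1-\eta/3)$ identifies
\[
\varphi(\eta) \;=\; \left(\tfrac{3}{2}\right)^{\gamma}(1-\eta)^{\gamma}(1-\eta/3)^{\gamma} \;=\; \sum_{m=0}^{\infty} l_m^{2,\gamma}\,\eta^{m},
\]
and since the recursion for $l_m^{1,\gamma}$ in (\ref{2.4}) coincides with the one for $g_m^{\gamma}$ in (\ref{2.13}), one has $l_m^{1,\gamma}=g_m^{\gamma}$, so (\ref{2.4}) is exactly the Cauchy convolution of the two binomial expansions $(3/2)^{\gamma}(1-\eta)^{\gamma}$ and $(1-\eta/3)^{\gamma}$.

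The explicit values of $l_0^{2,\gamma},\ldots,l_3^{2,\gamma}$ then follow by substituting $g_0^{\gamma}=1$, $g_1^{\gamma}=-\gamma$, $g_2^{\gamma}=\gamma(\gamma-1)/2$, and $g_3^{\gamma}=-\gamma(\gamma-1)(\gamma-2)/6$ into (\ref{2.4}) and collecting terms; the signs $l_0^{2,\gamma}>0$ and $l_1^{2,\gamma}<0$ are immediate for $\gamma\in(0,1)$, whereas $l_2^{2,\gamma}$ and $l_3^{2,\gamma}$ are genuinely sign-changing inside $(0,1)$ and are therefore left as factored formulas. For $\sum_{k=0}^{\infty}l_k^{2,\gamma}=0$, absolute convergence is guaranteed by $|g_k^{\gamma}|=\mathcal{O}(k^{-\gamma-1})$ (Lemma~\ref{lemma3.2}), so Fubini yields
\[
\sum_{m=0}^{\infty}l_m^{2,\gamma} \;=\; \left(\tfrac{3}{2}\right)^{\gamma}\!\!\left(\sum_{k=0}^{\infty}3^{-k}g_k^{\gamma}\right)\!\!\left(\sum_{j=0}^{\infty}g_j^{\gamma}\right) \;=\; \left(\tfrac{3}{2}\right)^{\gamma}\!\!\left(\tfrac{2}{3}\right)^{\gamma}\!\cdot\!0 \;=\; 0,
\]
the two inner sums being Abel limits of $(1-\zeta)^{\gamma}$ at $\zeta=1/3$ and $\zeta=1$.

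The heart of the argument, and the main obstacle, is $l_k^{2,\gamma}<0$ for every $k\ge 4$ and every $\gamma\in(0,1)$. I would split
\[
l_m^{2,\gamma} \;=\; g_m^{\gamma}\Bigl[\left(\tfrac{3}{2}\right)^{\gamma}+3^{-m}\Bigr] \;+\; \left(\tfrac{3}{2}\right)^{\gamma}\sum_{k=1}^{m-1}3^{-k}g_k^{\gamma}g_{m-k}^{\gamma};
\]
the boundary term is strictly negative because $g_m^{\gamma}<0$, whereas the interior sum is strictly positive because $g_k^{\gamma}g_{m-k}^{\gamma}>0$ for $1\le k\le m-1$. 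The geometric factor $3^{-k}$ combined with the monotonic decay $|g_{k+1}^{\gamma}|/|g_k^{\gamma}|=(k-\gamma)/(k+1)<1$ should make the interior piece small relative to $|g_m^{\gamma}|$. The delicate point is that the inequality must already be sharp at $m=4$, since $l_3^{2,\gamma}$ itself can be positive; I would therefore verify $m=4,5,6$ by direct substitution (each reduces to a polynomial inequality in $\gamma$ on $(0,1)$) and for $m\ge 7$ use the tail estimate $\sum_{k=1}^{m-1}3^{-k}|g_k^{\gamma}||g_{m-k}^{\gamma}|\le |g_{m-1}^{\gamma}|\sum_{k=1}^{\infty}3^{-k}|g_k^{\gamma}|$ together with the ratio $|g_{m-1}^{\gamma}|/|g_m^{\gamma}|=m/(m-1-\gamma)$ to close the inequality uniformly in $\gamma$. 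This splitting-and-tail-bound strategy is precisely the bookkeeping carried out in \cite{Chen:0013,Li:14}, from which the claim may be imported after the above reductions.
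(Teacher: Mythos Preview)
The paper does not supply its own proof of this lemma; it simply imports the result from \cite{Chen:0013,Li:14}. So the relevant question is whether your sketch stands on its own. Your computation of $l_0^{2,\gamma},\ldots,l_3^{2,\gamma}$ from the Cauchy product is correct, as is the argument for $\sum_k l_k^{2,\gamma}=0$ via the factored generating function (note only that your boundary term in the splitting should read $(3/2)^\gamma g_m^\gamma\bigl[1+3^{-m}\bigr]$, not $g_m^\gamma\bigl[(3/2)^\gamma+3^{-m}\bigr]$; the prefactor $(3/2)^\gamma$ multiplies the whole convolution).

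The genuine gap is in your tail estimate for $l_k^{2,\gamma}<0$, $k\ge 4$. You assert
\[
\sum_{k=1}^{m-1}3^{-k}|g_k^{\gamma}|\,|g_{m-k}^{\gamma}|\;\le\;|g_{m-1}^{\gamma}|\sum_{k=1}^{\infty}3^{-k}|g_k^{\gamma}|,
\]
which would require $|g_{m-k}^{\gamma}|\le|g_{m-1}^{\gamma}|$ for every $1\le k\le m-1$. But $|g_j^{\gamma}|$ is strictly \emph{decreasing} in $j\ge 1$ (from the recursion $|g_j^\gamma|/|g_{j-1}^\gamma|=(j-1-\gamma)/j<1$), so the inequality goes the other way: $|g_{m-k}^{\gamma}|\ge|g_{m-1}^{\gamma}|$, with equality only at $k=1$. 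In particular the term $k=m-1$ contributes $3^{-(m-1)}|g_{m-1}^\gamma|\,|g_1^\gamma|$ with $|g_1^\gamma|=\gamma$, which for large $m$ is much larger than $3^{-(m-1)}|g_{m-1}^\gamma|^2$. Your bound therefore fails as written, and the subsequent ratio comparison with $|g_{m-1}^\gamma|/|g_m^\gamma|$ does not close. A correct argument has to handle both ends of the convolution (small $k$ and small $m-k$) separately, or else exploit finer structure; this is exactly the nontrivial bookkeeping carried out in the cited references, and it cannot be replaced by the one-line majorization you propose.
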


\begin{lemma} \label{lemma3.5}
 Let $1<\alpha<2$ and $A_\alpha $ be given in (\ref{2.17}). Then
\begin{equation*}
-  \frac{1}{h^\alpha}(A_\alpha u,u)\geq -\frac{2}{b^\alpha\Gamma(1-\alpha)}||u||^2>0 ~~{\rm with}~u\in \mathbb{C}^{M-1},~\Omega=(0,b).
\end{equation*}
\end{lemma}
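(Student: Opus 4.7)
The plan is to estimate $\mu_{\min}(-A_\alpha)$ from below and then convert that to the quadratic-form estimate via $-(A_\alpha u,u)\ge\mu_{\min}(-A_\alpha)||u||^2$, which is valid because $A_\alpha$ is real symmetric so the Rayleigh quotient is real on $\mathbb{C}^{M-1}$. The whole estimate will come from Gershgorin's circle theorem. Indeed, by Lemma~\ref{lemma3.1}, the matrix $-A_\alpha$ has strictly positive diagonal $-2w_1^\alpha>0$ and nonpositive off-diagonal entries (the super/sub-diagonal is $-(w_0^\alpha+w_2^\alpha)\le 0$, and the farther entries are $-w_k^\alpha\le 0$ for $k\ge 3$). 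For such a matrix Gershgorin collapses to
$$
\mu_{\min}(-A_\alpha) \;\ge\; -\max_{1\le i\le M-1} S_i, \qquad S_i := \sum_{j=1}^{M-1} (A_\alpha)_{ij}.
$$

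Next I would compute the row sum $S_i$ directly from the banded structure (\ref{2.15}). A routine regrouping shows that for an interior index $2\le i\le M-2$,
$$
S_i = \sum_{k=0}^{i} w_k^\alpha + \sum_{k=0}^{M-i} w_k^\alpha,
$$
whereas for the two boundary indices $i=1$ and $i=M-1$ this expression overestimates $S_i$ by exactly the positive quantity $w_0^\alpha$ (owing to the missing $A_{1,0}$ and $A_{M-1,M}$ entries); in particular, $\max_i S_i$ is attained in the interior, so it suffices to bound the interior expression. Writing $w_k^\alpha=\tfrac{\alpha}{2}g_k^\alpha+\tfrac{2-\alpha}{2}g_{k-1}^\alpha$ converts a partial sum into a sum of partial $g$-sums,
$$
-\sum_{k=0}^{m} w_k^\alpha \;=\; -\tfrac{\alpha}{2}\sum_{k=0}^{m} g_k^\alpha-\tfrac{2-\alpha}{2}\sum_{k=0}^{m-1} g_k^\alpha,
$$
and then Lemma~\ref{lemma3.2}, together with the monotonicity $(m-1)^{-\alpha}\ge m^{-\alpha}$, collapses both terms into the single lower bound $-\sum_{k=0}^{m} w_k^\alpha\ge 1/(m^\alpha|\Gamma(1-\alpha)|)$.

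Combining these estimates, for every index $1\le i\le M-1$ we obtain
$$
-S_i \;\ge\; \frac{1}{|\Gamma(1-\alpha)|}\left(\frac{1}{i^\alpha}+\frac{1}{(M-i)^\alpha}\right) \;>\; \frac{2}{M^\alpha|\Gamma(1-\alpha)|},
$$
the last inequality because $i$ and $M-i$ are both at most $M-1<M$. Substituting $M=b/h$ yields $\mu_{\min}(-A_\alpha)>-2h^\alpha/(b^\alpha\Gamma(1-\alpha))$, and dividing by $h^\alpha$ produces the stated inequality. The main technical nuisance is the careful bookkeeping needed to reorganise (\ref{2.15}) into the clean symmetric double-partial-sum form and to track the sign of the boundary defect $w_0^\alpha$; a smaller worry is the case $m=1$ of the $w$-bound, since $\sum_{k=0}^{0}g_k^\alpha=1>0$ so Lemma~\ref{lemma3.2} does not apply literally, but the required inequality there can be verified directly from Lemma~\ref{lemma3.1}.
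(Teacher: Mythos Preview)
Your Gershgorin strategy is morally the same as the paper's argument: the paper bounds $(B_\alpha u,u)$ by replacing every cross term $u_iu_{i+k-1}$ with $\tfrac12(u_i^2+u_{i+k-1}^2)$, obtaining the single uniform bound $(B_\alpha u,u)\le\bigl(\sum_{k=0}^{M-1}w_k^\alpha\bigr)\|u\|^2$, doubling to pass to $A_\alpha$, and then invoking Lemma~\ref{lemma3.2} once with $n=M$. For a symmetric matrix this AM--GM manoeuvre and Gershgorin are the same thing; the paper's advantage is that working with $B_\alpha$ before symmetrisation yields a row-independent bound and no boundary bookkeeping at all.

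Your version, by contrast, produces row-dependent sums $S_i$, and there is a genuine gap at the boundary rows. The inequality you flag as a ``smaller worry'' is actually false: from (\ref{2.12})--(\ref{2.13}) one computes directly
\[
-\sum_{k=0}^{1}w_k^\alpha \;=\; -w_0^\alpha-w_1^\alpha \;=\; \frac{\alpha^2-2}{2},
\]
which is \emph{negative} for every $1<\alpha<\sqrt{2}$, so it certainly cannot dominate $1/|\Gamma(1-\alpha)|>0$. Lemma~\ref{lemma3.1} does not rescue this. Relatedly, the inference ``the interior formula overestimates $S_1$ by $w_0^\alpha$, hence $\max_i S_i$ is attained in the interior'' is a non sequitur: the overestimate only compares $S_1$ with the interior \emph{formula evaluated at $i=1$}, not with any actual interior $S_j$.

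The approach is repairable --- for instance, one can treat $i=1$ directly via $-S_1=-w_1^\alpha-\sum_{k=0}^{M-1}w_k^\alpha$ and check $-w_1^\alpha=\tfrac{(\alpha+2)(\alpha-1)}{2}\ge 1/|\Gamma(1-\alpha)|$ (which reduces to $\Gamma(2-\alpha)\ge 2/(\alpha+2)$ and is true) --- but the fix you proposed does not work, and the paper's route through $B_\alpha$ sidesteps the issue entirely.
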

\begin{proof}
Let the vector $u=(u_1,u_2,\ldots,u_{M-1})^T$ with $u_0=u_M=0$. Since $A_\alpha$ is symmetric, we just need to the case that $u$ is a real vector.
%Next we prove this Lemma by two steps:  real vector $u \in \mathbb{R}^{M-1}$ and complex vector $u\in \mathbb{C}^{M-1}$.

%We first prove  $u \in \mathbb{R}^{M-1}$.
From (\ref{2.17}) and Lemma \ref{lemma3.1}, there exists
\begin{equation*}
\begin{split}
&(B_\alpha u,u)\\
&=h\sum_{i=1}^{M-1} \left(\sum_{k=0}^{M-i}w^\alpha_ku_{i+k-1}  \right)u_i
              =h\sum_{k=0}^{M-1}w^\alpha_k \left(\sum_{i=1}^{M-k}u_{i+k-1} u_i \right)\\
&=w^\alpha_1 h\sum_{i=1}^{M-1}u_i^2 +\left(w^\alpha_0+w^\alpha_2\right) h\sum_{i=1}^{M-2}u_iu_{i+1}+h\sum_{k=3}^{M-1}w^\alpha_k \left(\sum_{i=1}^{M-k}u_{i+k-1} u_i \right)\\
&\leq w^\alpha_1 ||u||^2 +\left(w^\alpha_0+w^\alpha_2\right) h\sum_{i=1}^{M-2}\frac{u_i^2+u_{i+1}^2}{2}+h\sum_{k=3}^{M-1}w^\alpha_k \left(\sum_{i=1}^{M-k}\frac{u_i^2+u_{i+k-1}^2}{2} \right)\\
%&\leq w^\alpha_1 ||u||^2 +\left(w^\alpha_0+w^\alpha_2\right) h\sum_{i=1}^{M-2}\frac{u_i^2+u_{i+1}^2}{2}+h\sum_{k=3}^{M-1}w^\alpha_k \left(\sum_{i=1}^{M-k}\frac{u_i^2+u_{i+k-1}^2}{2} \right)\\
& \leq \left(\sum_{k=0}^{M-1}w^\alpha_k \right)||u||^2=\left(\sum_{k=0}^{M-2}g^\alpha_k +\frac{\alpha}{2}g^\alpha_{M-1}\right)||u||^2\\
&\leq  \left(\sum_{k=0}^{M-1}g^\alpha_k \right)||u||^2\leq  \left(\sum_{k=0}^{M}g^\alpha_k \right)||u||^2.
\end{split}
\end{equation*}
Since $(B^T_\alpha u,u)=(B_\alpha u,u)$ and $A_\alpha=\frac{1}{2}(B^T_\alpha+B_\alpha)$, we have
\begin{equation}\label{3.16}
  (A_\alpha u,u)\leq  2\left(\sum_{k=0}^{M}g^\alpha_k \right)||u||^2.
\end{equation}

From (\ref{3.16}) and (\ref{3.4}), we obtain
\begin{equation*}
\begin{split}
-  \frac{1}{h^\alpha}(A_\alpha u,u)
&\geq -\frac{2}{h^\alpha} \left(\sum_{k=0}^{M}g^\alpha_k \right)||u||^2
\geq \frac{2}{h^\alpha}   \frac{-1}{M^\alpha\Gamma(1-\alpha)}||u||^2\\
&\geq -\frac{2}{b^\alpha\Gamma(1-\alpha)}||u||^2>0 ~~{\rm with}~~\Omega=(0,b),~~u \in \mathbb{R}^{M-1}.
\end{split}
\end{equation*}

%When $u \in \mathbb{C}^{M-1}$.  Since $A_\alpha$ is the real symmetric positive definite matrix,
%using the above inequality, the smallest  eigenvalues of $A_\alpha$ satisfies
%$$\lambda_{\min} \left(-  \frac{A_\alpha}{h^\alpha}\right)\geq -\frac{2}{b^\alpha\Gamma(1-\alpha)}.$$
%Then using Courant-Fischer minimax theroem of \cite[p.7]{Chan:07}, we have
%$$-  \frac{1}{h^\alpha}\frac{\left(A_\alpha u, u\right)}{(u,u)} \geq \min_{u \neq 0} \left(-  \frac{1}{h^\alpha}\frac{\left(A_\alpha u, u\right)}{(u,u)}\right)
%=\lambda_{\min} \left(-  \frac{A_\alpha}{h^\alpha}\right)\geq -\frac{2}{b^\alpha\Gamma(1-\alpha)}, u \in \mathbb{C}^{M-1}.$$
The proof is completed.
\end{proof}

\begin{lemma}\label{lemma3.6}
Let $0<\gamma<1$ and  the coefficients ${l}_k^{2,\gamma}$ be  given  in (\ref{2.4}). Then
\begin{equation*}
\begin{split}
  g(\gamma,z)=\sum_{k=1}^{\infty}l^{2,\gamma}_k\left(\cos(kz)-1\right)\geq 0,~~z\in[0,\pi].
\end{split}
\end{equation*}
\end{lemma}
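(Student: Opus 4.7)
The plan is to recognize $g(\gamma,z)$ as the real part of the generating function from (\ref{2.2}) (with $\widetilde\lambda=0$, $\nu=2$) evaluated on the unit circle, and then verify positivity via an argument/modulus calculation.

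First I would observe that with $\widetilde{\lambda}=0$ the generating function (\ref{2.2}) becomes
\begin{equation*}
\sum_{k=0}^\infty l_k^{2,\gamma}\zeta^k \;=\; \Bigl((1-\zeta)+\tfrac12(1-\zeta)^2\Bigr)^{\!\gamma} \;=\; \Bigl(\tfrac32\Bigr)^{\!\gamma}(1-\zeta)^{\gamma}\Bigl(1-\tfrac{\zeta}{3}\Bigr)^{\!\gamma}.
\end{equation*}
Setting $\zeta=e^{iz}$ and taking real parts gives $\sum_{k=0}^\infty l_k^{2,\gamma}\cos(kz)$. Using $\sum_{k=0}^\infty l_k^{2,\gamma}=0$ (Lemma \ref{lemma3.3}) to absorb the $-1$ term, and splitting off the $k=0$ contribution (which cancels since $\cos 0 = 1$), I obtain the clean identity
\begin{equation*}
g(\gamma,z) \;=\; \operatorname{Re}\!\left[\Bigl(\tfrac32\Bigr)^{\!\gamma}(1-e^{iz})^{\gamma}\Bigl(1-\tfrac{e^{iz}}{3}\Bigr)^{\!\gamma}\right].
\end{equation*}

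Next I would pass to polar form. For $z\in[0,\pi]$ one has $1-e^{iz}=2\sin(z/2)\,e^{i(z/2-\pi/2)}$, and $1-e^{iz}/3=r(z)e^{-i\theta(z)}$ with $r(z)=\sqrt{10/9-2\cos(z)/3}\ge 0$ and $\theta(z)=\arctan\!\bigl(\sin z/(3-\cos z)\bigr)\in[0,\pi/2)$. Combining these,
\begin{equation*}
g(\gamma,z) \;=\; \Bigl(\tfrac32\Bigr)^{\!\gamma}\bigl(2\sin(z/2)\bigr)^{\gamma}r(z)^{\gamma}\cos\!\bigl(\gamma\,\Phi(z)\bigr), \qquad \Phi(z):=\tfrac{z}{2}-\tfrac{\pi}{2}-\theta(z).
\end{equation*}
All prefactors are nonnegative, so it suffices to show $\cos(\gamma\Phi(z))\ge 0$ for $z\in[0,\pi]$ and $\gamma\in(0,1)$.

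The core of the proof is then a monotonicity argument for $\Phi$. A direct differentiation gives $\theta'(z)=(3\cos z-1)/(10-6\cos z)$, whence
\begin{equation*}
\Phi'(z)\;=\;\tfrac12-\theta'(z)\;=\;\frac{6(1-\cos z)}{10-6\cos z}\;\ge\;0.
\end{equation*}
Hence $\Phi$ is nondecreasing on $[0,\pi]$, with $\Phi(0)=-\pi/2$ and $\Phi(\pi)=0$, so $\Phi(z)\in[-\pi/2,0]$ and therefore $\gamma\Phi(z)\in(-\pi/2,0]$. Since cosine is nonnegative on $[-\pi/2,\pi/2]$, this gives $\cos(\gamma\Phi(z))\ge\cos(\gamma\pi/2)>0$ and completes the proof.

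The main obstacle I anticipate is confirming that the branch choices are consistent: one must use the principal branch of $(\cdot)^\gamma$ throughout so that the polar representations of the two factors multiply to the claimed phase $e^{i\gamma(z/2-\pi/2-\theta(z))}$. For $z\in[0,\pi]$ both $1-e^{iz}$ and $1-e^{iz}/3$ lie in the closed right half-plane (since $\operatorname{Re}(1-e^{iz}/3)\ge 2/3>0$ and $\operatorname{Re}(1-e^{iz})=1-\cos z\ge 0$), so their arguments lie in $[-\pi/2,\pi/2]$ and the principal branches behave multiplicatively without a $2\pi$ correction, which secures the identity above.
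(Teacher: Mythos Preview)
Your proof is correct and takes a genuinely different route from the paper's argument.

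The paper proceeds by a case analysis on $\gamma$, splitting into the three ranges $(0,5/8]$, $[5/8,7/8]$, $[7/8,1)$ according to the signs of $l_1^{2,\gamma},l_2^{2,\gamma},l_3^{2,\gamma}$ supplied by Lemma~\ref{lemma3.3}. In the first range all $l_k^{2,\gamma}\le 0$ for $k\ge 1$ and the inequality is immediate; in the second and third ranges the paper isolates the finitely many positive coefficients and checks by hand that the partial sums $\sum_{k=1}^{2}$ (resp.\ $\sum_{k=1}^{3}$) of $l_k^{2,\gamma}(\cos(kz)-1)$ are already nonnegative, via explicit polynomial inequalities in $\cos z$.

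Your approach bypasses the case analysis entirely by evaluating the closed-form generating function on the unit circle. The factorization $(1-\zeta)+\tfrac12(1-\zeta)^2=\tfrac32(1-\zeta)(1-\zeta/3)$ is the key simplification, and your monotonicity computation $\Phi'(z)=6(1-\cos z)/(10-6\cos z)\ge 0$ pins down the range of the total phase cleanly. This yields more: you actually get the strict bound $g(\gamma,z)\ge (3/2)^{\gamma}(2\sin(z/2))^{\gamma}r(z)^{\gamma}\cos(\gamma\pi/2)$, which is positive for $z\in(0,\pi]$, whereas the paper only obtains nonnegativity. Your treatment of the branch issue is correct (both factors sit in the closed right half-plane so the principal powers multiply without correction), and absolute convergence on $|\zeta|=1$ is not a problem since $\sum_k|l_k^{1,\gamma}|=2<\infty$ implies the Cauchy product has absolutely summable coefficients. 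The paper's proof is more elementary and self-contained, relying only on the tabulated sign information in Lemma~\ref{lemma3.3}; yours is shorter, sharper, and would generalize more readily to higher-order generating functions $\kappa^{\nu,\gamma,0}$ with a known factorization.
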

\begin{proof}
We prove the above result for three cases: $\gamma \in \left(0,5/8\right] $; $\gamma \in\left[5/8,7/8\right]$; and $\gamma \in \left[7/8, 1\right)$.

Case 1: $\gamma \in \left(0,5/8\right] $. From Lemma \ref{lemma3.3}, we have  $l_k^{2,\gamma}\leq 0$,  $\forall k \geq 1$, which yields
$$g(\gamma,z)\geq 0.$$

Case 2: $\gamma \in\left[5/8,7/8\right]$. Using Lemma \ref{lemma3.3}, there exists $l_1^{2,\gamma}<0$, $l_2^{2,\gamma}\geq 0$ and $l_k^{2,\gamma}\leq 0$ $\forall k \geq 3$.
If we can prove $\sum_{k=1}^{2}l^{2,\gamma}_k\left(\cos(kz)-1\right)\geq 0$, the result holds obviously. It can be easily checked that
\begin{equation*}
\begin{split}
  \sum_{k=1}^{2}l^{2,\gamma}_k\left(\cos(kz)-1\right)
  =\left(\frac{3}{2}\right)^{\gamma}\frac{2\gamma}{9}h(z)
\end{split}
\end{equation*}
with $h(z)= \left( 8\gamma-5 \right) \cos^2(z)-6\cos(z)+11-8\gamma .$  Since
$$h^{'}(z)=2\sin(z)\left[ 3-\left( 8\gamma-5 \right)\cos(z) \right]\geq 0,~~z\in[0,\pi],~~\gamma \in \left[5/8,7/8\right],$$
it yields  $h(z)\geq h(0)=0$.

Case 3: $\gamma \in \left[7/8, 1\right)$. According to Lemma \ref{lemma3.3}, it yields  $l_1^{2,\gamma}<0$, $l_2^{2,\gamma}\geq 0$, $l_3^{2,\gamma}\geq 0$ and $l_k^{2,\gamma}\leq 0$ $\forall k \geq 4$.
So, if $\sum_{k=1}^{3}l^{2,\gamma}_k\left(\cos(kz)-1\right)\geq 0$ holds, the result is obtained. It can be easily got that
\begin{equation*}
\begin{split}
  p(z):&=\sum_{k=1}^{3}l^{2,\gamma}_k\left(\cos(kz)-1\right)\\
 & =4l^{2,\gamma}_3\cos^3(z)+2l^{2,\gamma}_2\cos^2(z)+\left(l^{2,\gamma}_1-3l^{2,\gamma}_3\right)\cos(z) -\left( l^{2,\gamma}_1+2l^{2,\gamma}_2+l^{2,\gamma}_3\right).
\end{split}
\end{equation*}
For the above equation, we have
\begin{equation*}
\begin{split}
  p'(z)=-\sin(z)q(z)
\end{split}
\end{equation*}
with
\begin{equation*}
\begin{split}
  q(z) =12l^{2,\gamma}_3\cos^2(z)+4l^{2,\gamma}_2\cos(z)+\left(l^{2,\gamma}_1-3l^{2,\gamma}_3\right);
\end{split}
\end{equation*}
and
\begin{equation*}
\begin{split}
  q'(z) =-\sin(z)r(z)
\end{split}
\end{equation*}
with
\begin{equation*}
\begin{split}
  r(z) &=24l^{2,\gamma}_3\cos(z)+4l^{2,\gamma}_2\geq 4l^{2,\gamma}_2-24l^{2,\gamma}_3\\
 & =\left(\frac{3}{2}\right)^{\gamma}\frac{4\gamma}{9}\left[8\gamma-5-\frac{24}{9}(1-\gamma)\left(8\gamma-7\right)\right]\\
& \geq \left(\frac{3}{2}\right)^{\gamma}\frac{4\gamma}{9} \left( 2-\frac{1}{3}\right)>0~~~{\rm with}~\gamma \in \left[7/8, 1\right).
\end{split}
\end{equation*}
According to the above equations, we have $q'(z)\leq 0$,
$$q(z)\leq q(0)=\left(\frac{3}{2}\right)^{\gamma}\frac{4\gamma}{9}\left(1-\gamma\right)\left(8\gamma-15\right)<0,$$
and  $p'(z)\geq 0$,  $p(z)\geq p(0)=0.$
The proof is completed.
\end{proof}

\begin{lemma} \label{lemma3.7}
Let $l_k^{(2)}$ be defined by (\ref{2.22}) with $\lambda\geq 0$, $\rho U(x)\geq 0$, $J=\sqrt{-1}$.
Then for any positive integer $N$ and  complex vector $\left( v_i^0,v_i^1,\ldots,v_i^N \right) \in \mathbb{C}^{N+1}$, it holds that
$$\Re\left\{\sum_{n=0}^{N}\left(\sum_{k=0}^{n}l_k^{(2)}  e^{-J\eta U(x_i)  k\tau} v_i^{n-k}\right)\overline{v_i^n}\right\}\geq 0,~~i=1,2,\ldots, M-1,$$
where $\Re$ denotes the real part of the quantity.
\end{lemma}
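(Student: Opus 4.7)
My plan is to recast the double sum as a single Fourier-domain integral via Plancherel's identity and then reduce the resulting symbol nonnegativity to a $\mu$-weighted strengthening of Lemma~\ref{lemma3.6} that can be handled by a short concavity argument.

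First I would extend $\{v_i^n\}_{n=0}^{N}$ by zero to a sequence on $\mathbb{Z}$ and absorb the $\eta$-phase into the coefficients by setting $\tilde l_k := l_k^{(2)}e^{-J\eta U(x_i)k\tau}$. Applying Plancherel's identity to the convolution-weighted sum $\sum_n(\tilde l\ast v)^n\,\overline{v^n}$ rewrites the target expression as $\tfrac{1}{2\pi}\int_{-\pi}^{\pi}\hat l(\omega)|\hat v(\omega)|^2\,d\omega$ with symbol $\hat l(\omega)=\sum_{k\ge 0}\tilde l_k e^{-ik\omega}$, so it suffices to prove $\Re\hat l(\omega)\ge 0$ pointwise in $\omega$. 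Writing $\mu := e^{-[\lambda+\rho U(x_i)]\tau}\in(0,1]$ and $\theta := \omega+\eta U(x_i)\tau$, the definition (\ref{2.22}) gives $\hat l(\omega)=\tilde l_0+\sum_{k\ge 1}\mu^k l_k^{2,\gamma}e^{-ik\theta}$; evaluating the generating function (\ref{2.2}) at $\zeta=1$ with the real tempering parameter $\lambda+\rho U(x_i)$ yields $\sum_{k\ge 0}\mu^k l_k^{2,\gamma}=[(1-\mu)+\tfrac12(1-\mu)^2]^{\gamma}$, and separating constant from oscillatory parts produces
$$\Re\hat l(\omega)=\underbrace{\bigl[(1-\mu)+\tfrac12(1-\mu)^2\bigr]^\gamma-\bigl[(1-e^{-\lambda\tau})+\tfrac12(1-e^{-\lambda\tau})^2\bigr]^\gamma}_{=:\,C}\;+\;S(\mu,\theta),$$
where $S(\mu,\theta):=\sum_{k\ge 1}\mu^k l_k^{2,\gamma}(\cos(k\theta)-1)$. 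The constant $C\ge 0$ because $\rho U(x_i)\ge 0$ forces $1-\mu\ge 1-e^{-\lambda\tau}$ and $x\mapsto x+x^2/2$ is increasing, so the problem reduces to proving $S(\mu,\theta)\ge 0$ for all $\mu\in[0,1]$ and $\theta\in[0,\pi]$.

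For $S\ge 0$ I would follow the three-case split of Lemma~\ref{lemma3.6} guided by the sign pattern in Lemma~\ref{lemma3.3}. For $\gamma\in(0,5/8]$ every $l_k^{2,\gamma}$ with $k\ge 1$ is nonpositive and each summand is a product of three nonpositive factors, so $S\ge 0$ term by term. For $\gamma\in[5/8,7/8]$ only $l_2^{2,\gamma}\ge 0$ has the wrong sign; the $k\ge 3$ terms stay nonnegative term-wise, and the $k\in\{1,2\}$ contribution equals $\mu\bigl(l_1^{2,\gamma}(\cos\theta-1)+\mu l_2^{2,\gamma}(\cos(2\theta)-1)\bigr)$, whose inner bracket is affine in $\mu$, nonnegative at $\mu=0$, and nonnegative at $\mu=1$ by Lemma~\ref{lemma3.6} Case~2. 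For $\gamma\in[7/8,1)$ both $l_2^{2,\gamma},l_3^{2,\gamma}$ are nonnegative; the $k\ge 4$ tail remains nonnegative term-wise, and the $k\in\{1,2,3\}$ contribution equals $\mu$ times $H(\mu):=l_1^{2,\gamma}(\cos\theta-1)+\mu l_2^{2,\gamma}(\cos(2\theta)-1)+\mu^2 l_3^{2,\gamma}(\cos(3\theta)-1)$, and $H$ is concave in $\mu$ because its $\mu^2$ coefficient is $\le 0$, with $H(0)=l_1^{2,\gamma}(\cos\theta-1)\ge 0$ and $H(1)=p(\theta)\ge 0$ by Lemma~\ref{lemma3.6} Case~3, so $H\ge 0$ on $[0,1]$ by the chord estimate.

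The principal obstacle will be the $\gamma\in[7/8,1)$ step, where one must pick exactly the right partial sum (the first three terms) so that the $\mu$-polynomial is concave and the remaining tail stays nonnegative term by term; once the sign pattern of Lemma~\ref{lemma3.3} is aligned with the concavity observation, Lemma~\ref{lemma3.6} supplies the $\mu=1$ endpoint bound for free and no trigonometric work beyond that already done in Lemma~\ref{lemma3.6} is required.
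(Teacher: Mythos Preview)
Your proposal is correct and structurally parallel to the paper's argument: both reduce the quadratic form to nonnegativity of the symbol (you via Plancherel, the paper via the Grenander--Szeg\H{o} theorem for the Toeplitz matrix $L^{(2)}$; these are the same reduction) and then split into the three ranges $\gamma\in(0,5/8]$, $[5/8,7/8]$, $[7/8,1)$ dictated by the sign pattern of Lemma~\ref{lemma3.3}, with Lemma~\ref{lemma3.6} supplying the $\mu=1$ endpoint. The execution differs in two places. First, you isolate the constant $C=[(1-\mu)+\tfrac12(1-\mu)^2]^{\gamma}-[(1-e^{-\lambda\tau})+\tfrac12(1-e^{-\lambda\tau})^2]^{\gamma}$ and dispatch it once by monotonicity; the paper instead keeps the constant inside each case and must verify the ad-hoc inequalities $l_1^{2,\gamma}+2l_2^{2,\gamma}<0$ and $l_1^{2,\gamma}+2l_2^{2,\gamma}+3l_3^{2,\gamma}<0$. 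Second, for the $\mu$-weighted cosine sum $S(\mu,\theta)$ you interpolate in $\mu$ (affine in Case~b, concave in Case~c) between the trivially nonnegative value at $\mu=0$ and the Lemma~\ref{lemma3.6} value at $\mu=1$, whereas the paper factors out a power of $\mu$ and uses that enlarging the coefficient of the nonnegative term $l_1^{2,\gamma}(\cos\theta-1)$ only helps. Your route avoids all the explicit polynomial-in-$\gamma$ side checks and requires no trigonometry beyond Lemma~\ref{lemma3.6}; the paper's route is more hands-on but self-contained. Either way the result follows.
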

\begin{proof}
 By the mathematical induction method, we can prove that
\begin{equation}\label{3.17}
\Re\left\{\sum_{n=0}^{N}\left(\sum_{k=0}^{n}l_k^{(2)}  e^{-J\eta U(x_i)  k\tau} v_i^{n-k}\right)\overline{v_i^n}\right\}=V_iL^{(2)}\overline{V_i}^T, ~~i=1,2,\ldots, M-1,
\end{equation}
 where
 $$V_i=\left( e^{-J\eta U(x_i)  N\tau } v_i^0,  e^{-J\eta U(x_i)  (N-1)\tau }v_i^1,\ldots,  e^{-J\eta U(x_i)  \tau }v_i^{N-1}, e^{-J\eta U(x_i) \cdot 0 \cdot \tau }v_i^N \right),$$
  and the real symmetric matrix
 \begin{equation}\label{3.18}
L^{(2)}=\left [ \begin{matrix}
l^{(2)}_0  &\frac{l^{(2)}_1}{2}&\frac{l^{(2)}_2}{2}    &      \cdots   & \frac{l^{(2)}_{N-1}}{2}     &  \frac{l^{(2)}_N}{2}  \\
\frac{l^{(2)}_1}{2}&  l^{(2)}_0   &\frac{l^{(2)}_1}{2}&  \frac{l^{(2)}_2}{2}   &     \cdots   & \frac{l^{(2)}_{N-1}}{2} \\
\frac{l^{(2)}_2}{2}            &\frac{l^{(2)}_1}{2}&l^{(2)}_0        & \frac{l^{(2)}_1}{2}&     \ddots              & \vdots  \\
\vdots                   &          \ddots         &       \ddots            &        \ddots            &      \ddots             &  \frac{l^{(2)}_2}{2} \\
  \frac{l^{(2)}_{N-1}}{2}  &   \ddots         &       \ddots        &        \ddots            &   l^{(2)}_0        & \frac{l^{(2)}_1}{2} \\
\frac{l^{(2)}_N}{2}   &   \frac{l^{(2)}_{N-1}}{2}   &   \cdots          &         \cdots           &\frac{l^{(2)}_1}{2}& l^{(2)}_0
 \end{matrix}
 \right ].
\end{equation}
Next we prove that the real symmetric  matrix  $L^{(2)}$ defined in (\ref{3.18}) is positive semi-definite.

We know that  the generating function \cite[p.12-14]{Chan:07} of $L^{(2)}$ is
\begin{equation}\label{3.19}
\begin{split}
 f(\gamma,z)
  &=\frac{1}{2}\sum_{k=0}^{\infty}l^{(2)}_ke^{Jkz}+\frac{1}{2}\sum_{k=0}^{\infty}l^{(2)}_ke^{-Jkz}=\sum_{k=0}^{\infty}l^{(2)}_k\cos(kz)\\
  &=\sum_{k=0}^{\infty}e^{-\left[\lambda+\rho U(x_i)  \right] k\tau} l_k^{2,\gamma}\cos(kz) -\left(\sum_{l=1}^2\frac{1}{l}\left(1- e^{-\lambda \tau} \right)^l\right)^{\gamma}\\
  &=\sum_{k=0}^{\infty}e^{-\left[\lambda+\rho U(x_i)  \right] k\tau} l_k^{2,\gamma}\cos(kz) -\sum_{k=0}^{\infty}e^{-\lambda k\tau} l_k^{2,\gamma}\\
  &=\sum_{k=1}^{\infty}e^{-\left[\lambda+\rho U(x_i)  \right] k \tau}l^{2,\gamma}_k\left(\cos(kz)-e^{\rho U(x_i)k\tau}\right),~~z\in [0,\pi].\\
\end{split}
\end{equation}
Since $ f(\gamma,z)$  is an even function and $2\pi$-periodic continuous real-valued functions defined on $[-\pi,\pi]$, we just need to consider its principal value on $[0,\pi]$.
Next we prove that $f(\gamma,z)$ defined in (\ref{3.19}) is nonnegative.

Case a: $\gamma \in \left(0,5/8\right] $. According to  Lemma \ref{lemma3.3}, we know that $e^{-\left[\lambda+\rho U(x_i)  \right] k \tau}l^{2,\gamma}_k\leq 0$   $\forall k \geq 1$, which yields
$f(\gamma,z)\geq 0.$

Case b: $\gamma \in\left[5/8,7/8\right]$.  It can be easily checked that
\begin{equation*}
\begin{split}
 &\sum_{k=1}^{2}e^{-\left[\lambda+\rho U(x_i)  \right] k \tau}l^{2,\gamma}_k\left(\cos(kz)-e^{\rho U(x_i)k\tau}\right)\\
&= \sum_{k=1}^{2}e^{-\left[\lambda+\rho U(x_i)  \right] k \tau}l^{2,\gamma}_k\left(\cos(kz)-1\right)
+\sum_{k=1}^{2}e^{-\left[\lambda+\rho U(x_i)  \right] k \tau}l^{2,\gamma}_k\left(1-e^{\rho U(x_i)k\tau}\right).
\end{split}
\end{equation*}
Using Case 2 of Lemma {\ref{lemma3.6}} and $l_1^{2,\gamma}<0$, $l_2^{2,\gamma}\geq 0$, there exists
\begin{equation*}
\begin{split}
 & \sum_{k=1}^{2}e^{-\left[\lambda+\rho U(x_i)  \right] k \tau}l^{2,\gamma}_k\left(\cos(kz)-1\right)\\
 & =e^{-2(\lambda+\rho U(x_i))  \tau}\left[ e^{(\lambda+\rho U(x_i))  \tau}l^{2,\gamma}_1\left(\cos(z)-1\right)+l^{2,\gamma}_2\left(\cos(2z)-1\right)  \right]\\
 &\geq  e^{-2(\lambda+\rho U(x_i))  \tau} \sum_{k=1}^{2}l^{2,\gamma}_k\left(\cos(kz)-1\right)\geq 0.
\end{split}
\end{equation*}
On the other hand, we have
\begin{equation*}
\begin{split}
&\sum_{k=1}^{2}e^{-\left[\lambda+\rho U(x_i)  \right] k \tau}l^{2,\gamma}_k\left(1-e^{\rho U(x_i)k\tau}\right)\\
&=e^{-(\lambda+\rho U(x_i))  \tau}\left(1-e^{\rho U(x_i)\tau}\right)\left(l^{2,\gamma}_1+e^{-(\lambda+\rho U(x_i))  \tau}l^{2,\gamma}_2\left(1+e^{\rho U(x_i)\tau}\right) \right)\\
&=e^{-(\lambda+\rho U(x_i))  \tau}\left(1-e^{\rho U(x_i)\tau}\right)\left(l^{2,\gamma}_1+e^{-\lambda  \tau}l^{2,\gamma}_2\left(e^{-\rho U(x_i)\tau}+1\right) \right)\\
&\geq e^{-(\lambda+\rho U(x_i))  \tau}\left(1-e^{\rho U(x_i)\tau}\right)\left(l^{2,\gamma}_1+2l^{2,\gamma}_2 \right)\geq 0,
\end{split}
\end{equation*}
since $l^{2,\gamma}_1+2l^{2,\gamma}_2=\left(\frac{3}{2}\right)^{\gamma}\frac{2\gamma}{9}\left(8\gamma-11\right) <0$.
Then we obtain  $f(\gamma,z)\geq 0.$

Case c: $\gamma \in \left[7/8, 1\right)$. It can be easily got that
\begin{equation*}
\begin{split}
 &\sum_{k=1}^{3}e^{-\left[\lambda+\rho U(x_i)  \right] k \tau}l^{2,\gamma}_k\left(\cos(kz)-e^{\rho U(x_i)k\tau}\right)\\
&= \sum_{k=1}^{3}e^{-\left[\lambda+\rho U(x_i)  \right] k \tau}l^{2,\gamma}_k\left(\cos(kz)-1\right)
+\sum_{k=1}^{3}e^{-\left[\lambda+\rho U(x_i)  \right] k \tau}l^{2,\gamma}_k\left(1-e^{\rho U(x_i)k\tau}\right).
\end{split}
\end{equation*}
From Case 3 of Lemma {\ref{lemma3.6}} and   $l_1^{2,\gamma}<0$, $l_2^{2,\gamma}\geq 0$, $l_3^{2,\gamma}\geq 0$, we get
\begin{equation*}
\begin{split}
 & \sum_{k=1}^{3}e^{-\left[\lambda+\rho U(x_i)  \right] k \tau}l^{2,\gamma}_k\left(\cos(kz)-1\right)\\
& =e^{-2\left(\lambda+ \rho U(x_i)\right)  \tau}\Big[ e^{\left(\lambda+ \rho U(x_i)\right)  \tau}l^{2,\gamma}_1\left(\cos(z)-1\right)+l^{2,\gamma}_2\left(\cos(2z)-1\right) \\
&\quad +  e^{-\left(\lambda+ \rho U(x_i)\right) \tau}l^{2,\gamma}_3\left(\cos(3z)-1\right)\Big]
\geq  e^{-2\left(\lambda+ \rho U(x_i)\right)  \tau} \sum_{k=1}^{3}l^{2,\gamma}_k\left(\cos(kz)-1\right)\geq 0.
\end{split}
\end{equation*}
On the other hand,
\begin{equation*}
\begin{split}
&\sum_{k=1}^{3}e^{-\left[\lambda+\rho U(x_i)  \right] k \tau}l^{2,\gamma}_k\left(1-e^{\rho U(x_i)k\tau}\right)\\
%&=e^{-(\lambda+\rho U(x_i))  \tau}\left(1-e^{\rho U(x_i)\tau}\right)\left(l^{2,\gamma}_1+e^{-(\lambda+\rho U(x_i))  \tau}l^{2,\gamma}_2\left(1+e^{\rho U(x_i)\tau}\right)
%+e^{-(\lambda+\rho U(x_i)) 2 \tau}l^{2,\gamma}_3\left(1+e^{\rho U(x_i)\tau}+e^{\rho U(x_i)2\tau}\right)\right)\\
&=e^{-(\lambda+\rho U(x_i))  \tau}\left(1-e^{\rho U(x_i)\tau}\right)\Big[l^{2,\gamma}_1+e^{-\lambda  \tau}l^{2,\gamma}_2\left(e^{-\rho U(x_i)\tau}+1\right)\\
&\quad +e^{-\lambda  2\tau}l^{2,\gamma}_3\left(e^{-\rho U(x_i)2\tau}+e^{-\rho U(x_i)\tau}+1\right) \Big]\\
&\geq e^{-(\lambda+\rho U(x_i))  \tau}\left(1-e^{\rho U(x_i)\tau}\right)\left(l^{2,\gamma}_1+2l^{2,\gamma}_2 +3l^{2,\gamma}_3\right)\geq 0,
\end{split}
\end{equation*}
where $l^{2,\gamma}_1+2l^{2,\gamma}_2 +3l^{2,\gamma}_3=\left(\frac{3}{2}\right)^{\gamma}\frac{2\gamma}{27}\left(-16\gamma^2+54\gamma-47\right) <0$,  $\gamma \in \left[7/8, 1\right)$.

Using  $f(\gamma,z)\geq 0$ and Grenander-Szeg\"{o} theorem \cite[p.13-14]{Chan:07},
it implies that $L^{(2)}$  is a real symmetric positive semi-definite matrix. The proof is completed.
\end{proof}

\subsection{Convergence and stability analysis}

First, we give a priori estimate for simplifying the proof of the convergence and stability.
%A  Prior estimate will be given to simplify the proof for  the  convergence and stability.
For the convenience, we analyze the numerical stability of the scheme with zero initial condition \cite{Deng:16,Ji:15}.

%It should be noted that we often   analyse the zero initial condition of the designing numerical schemes of the stability analysis
%\cite{Deng:16,Ji:15} for the convenience.

\begin{lemma}\label{lemma3.8}
Suppose $\{v_i^n\}$ is the solution of the difference scheme
\begin{equation}\label{3.20}
\begin{split}
& \frac{1}{\tau^\gamma} \sum_{k=0}^{n}l_k^{(2)}  e^{-J\eta U(x_i)  k\tau} v_{i}^{n-k}
  -\kappa  \frac{1}{h^\alpha}\sum_{j=0}^{M}w_{i,j}^{\alpha}v_{j}^{n}
  =  f_{i}^{n}, \\
&v_{i}^{0}=\phi_i, ~~1\leq i\leq M-1,\\
&v_0^n=v_M^n=0,~~0\leq n \leq N.
\end{split}
\end{equation}
Then for any positive integer $N$  with $N\tau\leq T$, it holds that
\begin{equation*}
\begin{split}
  \tau \sum_{n=1}^{N}||v^n||^2
&\leq    \frac{\tau^{1-\gamma}l_0^{(2)}b^\alpha \left|\Gamma(1-\alpha)\right|}{\kappa}||v^{0}||^2+  \frac{b^{2\alpha}\Gamma^2(1-\alpha)}{4\kappa^2} \cdot\tau\sum_{n=1}^{N}|| f^{n}||^2,
\end{split}
\end{equation*}
where $(x,t)\in (0,b) \times (0,T]$.
\end{lemma}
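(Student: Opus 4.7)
\smallskip
\noindent\textbf{Proof proposal for Lemma 3.8.}

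The plan is to carry out a standard discrete energy estimate, pairing the scheme with $v^n$ in the complex inner product, summing in time, and then closing with Young's inequality. First I would take the inner product of \eqref{3.20} with $v^n$: multiply by $h\overline{v_i^n}$, sum $i$ from $1$ to $M-1$, and then multiply by $\tau$ and sum $n$ from $1$ to $N$. This yields the identity
\begin{equation*}
\tau^{1-\gamma}\sum_{n=1}^{N}h\sum_{i=1}^{M-1}\sum_{k=0}^{n}l_k^{(2)}e^{-J\eta U(x_i)k\tau}v_i^{n-k}\overline{v_i^n}
-\frac{\kappa\tau}{h^{\alpha}}\sum_{n=1}^{N}(A_\alpha v^n,v^n)
=\tau\sum_{n=1}^{N}(f^n,v^n),
\end{equation*}
after which I would take real parts throughout.

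Next I would estimate the two terms on the left-hand side. For the temporal part, I exchange the orders of summation so that the $i$-sum is outermost, and then apply Lemma \ref{lemma3.7} point-by-point in $i$ to the sum over $n$ from $0$ to $N$. Since the $n=0$ contribution in that sum is exactly $l_0^{(2)}|v_i^0|^2$, subtracting it gives
\begin{equation*}
\Re\sum_{n=1}^{N}\sum_{k=0}^{n}l_k^{(2)}e^{-J\eta U(x_i)k\tau}v_i^{n-k}\overline{v_i^n}\;\geq\;-l_0^{(2)}|v_i^0|^2,
\end{equation*}
and summing against $h$ in $i$ produces $\;\geq -\tau^{1-\gamma}l_0^{(2)}\|v^0\|^2$ after restoring the prefactor. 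For the spatial part, Lemma \ref{lemma3.5} gives
\begin{equation*}
-\frac{\kappa}{h^{\alpha}}(A_\alpha v^n,v^n)\geq\frac{2\kappa}{b^{\alpha}|\Gamma(1-\alpha)|}\|v^n\|^2,
\end{equation*}
noting $|\Gamma(1-\alpha)|=-\Gamma(1-\alpha)>0$ for $\alpha\in(1,2)$. Putting these two bounds together gives
\begin{equation*}
\frac{2\kappa}{b^{\alpha}|\Gamma(1-\alpha)|}\,\tau\sum_{n=1}^{N}\|v^n\|^2\;\leq\;\Re\tau\sum_{n=1}^{N}(f^n,v^n)+\tau^{1-\gamma}l_0^{(2)}\|v^0\|^2.
\end{equation*}

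Finally I would close the estimate by Cauchy--Schwarz and Young's inequality $ab\leq\tfrac{\epsilon}{2}a^2+\tfrac{1}{2\epsilon}b^2$. Writing $C:=b^{\alpha}|\Gamma(1-\alpha)|/\kappa$ for bookkeeping, the natural choice $\epsilon=2/C$ splits
\begin{equation*}
\tau\sum_{n=1}^{N}\|f^n\|\,\|v^n\|\;\leq\;\frac{C}{4}\,\tau\sum_{n=1}^{N}\|f^n\|^2+\frac{1}{C}\,\tau\sum_{n=1}^{N}\|v^n\|^2,
\end{equation*}
so the $\|v^n\|^2$ piece on the right is absorbed into half of the coefficient $2/C$ on the left. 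Multiplying what remains by $C$ gives exactly the claimed bound, with the two constants $\tau^{1-\gamma}l_0^{(2)}C$ and $C^2/4$ matching the statement.

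The only subtlety I anticipate is bookkeeping around the $n=0$ term from Lemma \ref{lemma3.7}, which is what produces the $\tau^{1-\gamma}l_0^{(2)}\|v^0\|^2$ initial-data contribution; the rest is routine once the constants $C$ and $\epsilon$ are balanced so that Young's inequality absorbs exactly half of the coercivity coefficient $2/C$.
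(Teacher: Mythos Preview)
Your proposal is correct and follows essentially the same route as the paper: pair the scheme with $v^n$, sum in $n$, add back the $n=0$ term so Lemma~\ref{lemma3.7} applies, invoke Lemma~\ref{lemma3.5} for spatial coercivity, and close with Young's inequality using $\epsilon=\kappa/(b^\alpha|\Gamma(1-\alpha)|)$ (your $1/C$). The only cosmetic difference is that the paper phrases the $n=0$ step as ``adding $\tau^{1-\gamma}l_0^{(2)}\|v^0\|^2$ to both sides'' before applying Lemma~\ref{lemma3.7}, whereas you subtract it afterward; the arithmetic and constants are identical.
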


\begin{proof}
Multiplying (\ref{3.20}) by $h\overline{v_i^{n}}$ and summing up for $i$ from $1$ to $M-1$, we have
\begin{equation}\label{3.21}
\begin{split}
&\sum_{i=1}^{M-1}\frac{h}{\tau^\gamma} \left(\sum_{k=0}^{n}l_k^{(2)}  e^{-J\eta U(x_i)  k\tau} v_i^{n-k}\right)\overline{v_i^{n}}-  \kappa \frac{1}{h^\alpha}\left(A_\alpha v^{n},v^{n}\right) =\left(f^{n},v^{n}\right).
\end{split}
\end{equation}
Further multiplying (\ref{3.21})  by $\tau$ and summing up for $n$ from $1$ to $N$ and adding  $\tau^{1-\gamma}l_0^{(2)} ||v^{0}||^2$ on both sides of the obtained results, it yields
\begin{equation}\label{3.22}
\begin{split}
&\tau^{1-\gamma}h \sum_{i=1}^{M-1}\left\{\sum_{n=0}^{N} \left(\sum_{k=0}^{n}l_k^{(2)}  e^{-J\eta U(x_i)  k\tau} v_i^{n-k}\right)\overline{v_i^{n}}\right\}
 -   \frac{\kappa\tau}{h^\alpha}\sum_{n=1}^{N}\left(A_\alpha v^{n},v^{n}\right) \\
&= \tau\sum_{n=1}^{N}\left(f^{n},v^{n}\right)+\tau^{1-\gamma}l_0^{(2)} ||v^{0}||^2.
\end{split}
\end{equation}
Taking the real part on both sides of the  (\ref{3.22}) and using Lemmas  \ref{lemma3.5} and \ref{lemma3.7},
and the Schwarz inequality, Young's inequality,   we obtain
\begin{equation}\label{3.23}
\begin{split}
 \frac{-  2\kappa\tau}{b^\alpha \Gamma(1-\alpha)} \sum_{n=1}^{N}||v^n||^2
&\leq \tau\sum_{n=1}^{N}||f^{n}||\cdot||v^{n}||+\tau^{1-\gamma}l_0^{(2)} ||v^{0}||^2\\
&\leq \tau\sum_{n=1}^{N}\left( \epsilon ||v^{n}||^2+\frac{|| f^{n}||^2}{4\epsilon}\right) +\tau^{1-\gamma}l_0^{(2)} ||v^{0}||^2,
\end{split}
\end{equation}
where  $\epsilon>0$ and we use $\Re\left(f^{n},v^{n}\right)\leq |\left(f^{n},v^{n}\right)|\leq ||f^{n}||\cdot||v^{n}|| $.

Let  $\epsilon=\frac{-\kappa}{b^\alpha \Gamma(1-\alpha)}>0$. Using (\ref{3.23}), we obtain
\begin{equation*}
\begin{split}
& \left(\frac{-  2\kappa}{b^\alpha \Gamma(1-\alpha)}-\epsilon\right) \tau\sum_{n=1}^{N}||v^n||^2 \leq
 \frac{1}{4\epsilon}\tau\sum_{n=1}^{N}|| f^{n}||^2 +\tau^{1-\gamma}l_0^{(2)} ||v^{0}||^2,
\end{split}
\end{equation*}
which leads to
\begin{equation*}
\begin{split}
  \tau \sum_{n=1}^{N}||v^n||^2
&\leq  \frac{\tau^{1-\gamma}l_0^{(2)}}{\epsilon} ||v^{0}||^2+ \frac{1}{4\epsilon^2}\cdot\tau\sum_{n=1}^{N}|| f^{n}||^2\\
&=  \frac{\tau^{1-\gamma}l_0^{(2)}b^\alpha \left|\Gamma(1-\alpha)\right|}{\kappa}||v^{0}||^2+  \frac{b^{2\alpha}\Gamma^2(1-\alpha)}{4\kappa^2} \cdot\tau\sum_{n=1}^{N}|| f^{n}||^2.
\end{split}
\end{equation*}
The proof is completed.
\end{proof}

From the above lemma, we can obtain the following result.

\begin{theorem}\label{theorem3.9}
The difference scheme (\ref{2.23}) is unconditionally stable.
\end{theorem}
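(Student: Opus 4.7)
The plan is to derive Theorem 3.9 as an immediate corollary of Lemma 3.8, since scheme (2.23) is precisely the special case of scheme (3.20) in which the initial data $\phi_i$ vanish. The a priori estimate established in Lemma 3.8,
\begin{equation*}
\tau \sum_{n=1}^{N}\|v^n\|^2 \leq \frac{\tau^{1-\gamma} l_0^{(2)} b^\alpha |\Gamma(1-\alpha)|}{\kappa}\|v^0\|^2 + \frac{b^{2\alpha}\Gamma^2(1-\alpha)}{4\kappa^2}\,\tau\sum_{n=1}^{N}\|f^n\|^2,
\end{equation*}
already captures exactly what is needed. With $v^0\equiv 0$, the first term on the right drops out, leaving
\begin{equation*}
\tau \sum_{n=1}^{N}\|v^n\|^2 \leq \frac{b^{2\alpha}\Gamma^2(1-\alpha)}{4\kappa^2}\,\tau\sum_{n=1}^{N}\|f^n\|^2,
\end{equation*}
so that the discrete $L^2(0,T;L^2(\Omega))$-norm of the numerical solution is controlled by the corresponding norm of the source term with a constant that depends only on the parameters of the continuous problem, namely $b$, $\alpha$, and $\kappa=-K\kappa_\alpha>0$.

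First I would make explicit the identification of scheme (2.23) with (3.20) under zero initial data, so that the solution sequence $\{G_{i,p}^n\}$ plays the role of $\{v_i^n\}$, $f_{i,p}^n$ plays the role of $f_i^n$, and the operators on the left coincide verbatim. Next I would perturb the source by $\tilde f_{i}^n = f_{i,p}^n + \delta_{i}^n$ for an arbitrary perturbation, apply linearity of the scheme, and let $w_i^n$ denote the difference of the two resulting solutions; $w_i^n$ then satisfies (3.20) with zero initial data and right-hand side $\delta_i^n$. Applying the estimate above to $w_i^n$ yields
\begin{equation*}
\tau \sum_{n=1}^{N}\|w^n\|^2 \leq \frac{b^{2\alpha}\Gamma^2(1-\alpha)}{4\kappa^2}\,\tau\sum_{n=1}^{N}\|\delta^n\|^2,
\end{equation*}
which is precisely the statement that scheme (2.23) is stable in the discrete $L^2$-in-time, $L^2$-in-space sense.

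Finally I would underline that the stability constant $b^{2\alpha}\Gamma^2(1-\alpha)/(4\kappa^2)$ involves neither $\tau$ nor $h$, so no coupling condition between the spatial and temporal step sizes is required; this is the content of the word \emph{unconditionally}. I do not expect any real obstacle here, since the algebraic legwork (positivity of the generating function $f(\gamma,z)$ in Lemma 3.7, coercivity of $-A_\alpha/h^\alpha$ in Lemma 3.5, and the resulting energy inequality) has already been carried out in Lemma 3.8; the only point to verify carefully is that the constant on the right genuinely does not absorb any mesh-dependent quantity, which follows from inspecting each factor ($b$, $\Gamma(1-\alpha)$, and $\kappa$ are all fixed by the continuous problem).
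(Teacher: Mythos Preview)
Your proposal is correct and follows exactly the paper's approach: the paper simply states ``From the above lemma, we can obtain the following result'' before Theorem~3.9, treating it as an immediate consequence of the a~priori estimate in Lemma~3.8. Your write-up is in fact more explicit than the paper's, which gives no further argument.
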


\begin{theorem}\label{theorem3.10}
 Let $G_{i,\rho}^n$ be the approximate solution of $G(x_i,\rho,t_n)$ computed by the difference scheme (\ref{2.21}).
 Denote $\varepsilon_i^n=G(x_i,\rho,t_n)-G_{i,\rho}^n$. Then
\begin{equation*}
\begin{split}
  \tau \sum_{n=1}^{N}||\varepsilon^n||
  &\leq  \frac{C_G|\Gamma(1-\alpha)| b^{\alpha+\frac{1}{2}}T}{2\kappa}   \cdot(\tau^2+h^2).
\end{split}
\end{equation*}
where $C_G$ is defined by  (\ref{2.19}) and  $(x_i,t_n)\in (0,b) \times (0,T]$ with $N\tau\leq T$.
\end{theorem}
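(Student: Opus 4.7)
The plan is to reduce Theorem \ref{theorem3.10} to a direct application of the a priori estimate in Lemma \ref{lemma3.8}, combined with the local truncation error bound from (\ref{2.19}) and one use of the Cauchy--Schwarz inequality.

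First I would set up the error equation. Since $G(x_i,\rho,t_n)$ satisfies (\ref{2.18}) with $\nu=2$ and $G_{i,\rho}^n$ satisfies (\ref{2.21}), subtracting the two relations shows that the error $\varepsilon_i^n = G(x_i,\rho,t_n) - G_{i,\rho}^n$ satisfies
\begin{equation*}
\frac{1}{\tau^\gamma}\sum_{k=0}^n l_k^{(2)} e^{-J\eta U(x_i) k\tau}\varepsilon_i^{n-k} - \kappa\,\frac{1}{h^\alpha}\sum_{j=0}^M w_{i,j}^\alpha \varepsilon_j^n = r_i^n,
\end{equation*}
with zero initial condition $\varepsilon_i^0=0$ and zero boundary values $\varepsilon_0^n=\varepsilon_M^n=0$. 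This is exactly the system (\ref{3.20}) with $\phi_i=0$ and $f_i^n$ replaced by $r_i^n$.

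Next I would invoke Lemma \ref{lemma3.8}. Because $\|\varepsilon^0\|=0$, the first term in the a priori bound drops out, leaving
\begin{equation*}
\tau\sum_{n=1}^N \|\varepsilon^n\|^2 \le \frac{b^{2\alpha}\Gamma^2(1-\alpha)}{4\kappa^2}\,\tau\sum_{n=1}^N \|r^n\|^2.
\end{equation*}
The truncation error (\ref{2.19}) with $\nu=2$ gives $|r_i^n|\le C_G(\tau^2+h^2)$, so
\begin{equation*}
\|r^n\|^2 = h\sum_{i=1}^{M-1}|r_i^n|^2 \le b\,C_G^2(\tau^2+h^2)^2,
\end{equation*}
using $h(M-1)\le b$. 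Summing over $n$ with $N\tau\le T$ yields $\tau\sum_{n=1}^N\|r^n\|^2 \le Tb\,C_G^2(\tau^2+h^2)^2$, hence
\begin{equation*}
\tau\sum_{n=1}^N \|\varepsilon^n\|^2 \le \frac{b^{2\alpha+1}\Gamma^2(1-\alpha)\,T\,C_G^2}{4\kappa^2}(\tau^2+h^2)^2.
\end{equation*}

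Finally, I would convert this $\ell^2$-in-time bound to the $\ell^1$-in-time bound claimed in the theorem by one application of the discrete Cauchy--Schwarz inequality:
\begin{equation*}
\tau\sum_{n=1}^N \|\varepsilon^n\| \le \sqrt{N\tau}\,\Bigl(\tau\sum_{n=1}^N \|\varepsilon^n\|^2\Bigr)^{1/2} \le \sqrt{T}\cdot\frac{b^{\alpha+1/2}|\Gamma(1-\alpha)|\sqrt{T}\,C_G}{2\kappa}(\tau^2+h^2),
\end{equation*}
which rearranges to exactly the stated estimate. There is no real obstacle here: all the hard work has been absorbed into Lemmas \ref{lemma3.5}--\ref{lemma3.8}, and the proof amounts to recognizing that the error system fits the template of Lemma \ref{lemma3.8} with zero initial data, plugging in the pointwise truncation bound, and upgrading the $\ell^2_\tau$ estimate to an $\ell^1_\tau$ estimate by Cauchy--Schwarz. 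The only place one must be mildly careful is tracking constants so that the power $b^{\alpha+1/2}$ and the factor $T$ (rather than $\sqrt{T}$ or $T^{3/2}$) emerge correctly, which is what the combination $\sqrt{N\tau}\cdot\sqrt{Tb}\cdot b^\alpha$ produces.
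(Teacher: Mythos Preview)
Your proposal is correct and follows essentially the same approach as the paper: derive the error equation by subtracting (\ref{2.21}) from (\ref{2.18}) so that $\varepsilon^n$ satisfies (\ref{3.20}) with zero initial data and right-hand side $r_i^n$, apply Lemma \ref{lemma3.8}, insert the truncation bound (\ref{2.19}), and finish with the discrete Cauchy--Schwarz inequality in time. The constant tracking matches the paper's exactly.
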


\begin{proof}
Let $G(x_i,\rho,t_n)$ be the exact solution of (\ref{1.1}) at the mesh point $(x_i,t_n)$,
and $\varepsilon_i^n=G(x_i,\rho,t_n)-G_{i,\rho}^n$.
Subtracting (\ref{2.18}) from (\ref{2.21}) with $\varepsilon_i^0=0$, it yields
\begin{equation}\label{3.020}
\begin{split}
& \frac{1}{\tau^\gamma} \sum_{k=0}^{n}l_k^{(2)}  e^{-J\eta U(x_i)  k\tau} \varepsilon_{i}^{n-k}
  -\kappa  \frac{1}{h^\alpha}\sum_{j=0}^{M}w_{i,j}^{\alpha}\varepsilon_{j}^{n}
  =  r_{i}^{n};
\end{split}
\end{equation}
and  using Lemma \ref{lemma3.8}, it holds that
\begin{equation*}
\begin{split}
  \tau \sum_{n=1}^{N}||\varepsilon^n||^2
&\leq    \frac{b^{2\alpha}\Gamma^2(1-\alpha)}{4\kappa^2} \cdot\tau\sum_{n=1}^{N}|| r^{n}||^2
 \leq  C^2_G\frac{\Gamma^2(1-\alpha)b^{1+2\alpha}T}{4\kappa^2}   \cdot(\tau^2+h^2)^2.
\end{split}
\end{equation*}
Using the Cauchy-Schwarz inequality and the above inequality, it yields
\begin{equation*}
\begin{split}
  \left(\tau \sum_{n=1}^{N}||\varepsilon^n||\right)^2
  &\leq   \left(\tau \sum_{n=1}^{N}1\right) \left(\tau \sum_{n=1}^{N}||\varepsilon^n||^2\right)\\
  &\leq   C^2_G\frac{\Gamma^2(1-\alpha)b^{1+2\alpha}T^2}{4\kappa^2}   \cdot(\tau^2+h^2)^2,
\end{split}
\end{equation*}
which leads to
\begin{equation*}
\begin{split}
  \tau \sum_{n=1}^{N}||\varepsilon^n||
  &\leq  \frac{C_G|\Gamma(1-\alpha)| b^{\alpha+\frac{1}{2}}T}{2\kappa}   \cdot(\tau^2+h^2).
\end{split}
\end{equation*}
The proof is completed.
\end{proof}

%%%%%%%%%%%%%%%%%%%%%%%%%%%%%%%%%%%%%  Example 5.2  %%%%%%%%%%%%
\section{Numerical results}
We numerically verify the above theoretical results including convergence orders and numerical stability.  And the $ l_\infty$ norm is used to measure the numerical errors. We further extend the application of the algorithm to simulate the probability of the first passage time.

\subsection{Numerical results for $G(x,p,t)$}
In this subsection, we give the following  two examples: one is a  artificial solution and the other is a unknown solution for (\ref{1.1}).
\begin{example}\end{example}
Consider (\ref{1.1}) on a finite domain with $0<x <1$,  $0<t \leq 1$, the coefficient  $K=1$, and the forcing function
\begin{equation*}
\begin{split}
f(x,p,t)=&\frac{\Gamma(3+\gamma)}{\Gamma(3)}e^{-\left(\lambda+ \left(\rho+J\eta\right)x \right) t}\cdot t^2x^2(1-x)^2\\
&  -\lambda^\alpha e^{-\left(\lambda+ \left(\rho+J\eta\right)x \right) t}\cdot\left(t^{2+\gamma}+1\right)x^2(1-x)^2\\
       &  +\frac{e^{-\lambda t}(t^{2+\gamma}+1)}{2\cos(\alpha \pi/2)} \left({ _{0}}D_x^{\alpha}+{ _{x}}D_1^{\alpha}\right)[e^{-\left(\rho+J\eta\right)tx}\cdot x^2(1-x)^2],
\end{split}
\end{equation*}
where the left and right fractional derivatives of the given functions are calculated by the algorithm presented in the Appendixes of \cite{CD:15, Hesthaven:07}.
The initial condition is $G(x,p,0)=x^2(1-x)^2$ with the zero boundary
conditions. Then (\ref{1.1}) has the exact
solution $$G(x,p,t)=e^{-\left(\lambda+ \left(\rho+J\eta\right)U(x) \right) t}\left(t^{2+\gamma}+1\right)x^2(1-x)^2. $$

\begin{table}[h]\fontsize{9.5pt}{12pt}\selectfont%生成浮动表格
 \begin{center}%\def\tabcolsep{28.5pt}%表格居中
  \caption {The maximum errors and convergence orders for  (\ref{2.21}) with  $\nu=2$, $h=\tau,$ $U(x)=x$, $J=\sqrt{-1}$.} \vspace{5pt}%标题，离表格一定的距离
\begin{tabular*}{\linewidth}{@{\extracolsep{\fill}}*{8}{c}}                                    \hline  %画顶端的横线
$\left(\lambda,\rho,\eta\right)$ &$h$ & $\alpha=1.3,\gamma=0.8$  & Rate        & $\alpha=1.8,\gamma=0.3$ &   Rate   \\\hline
                        &        ~~1/20&  1.1304e-03   &             & 1.2148e-03  &           \\
($\frac{1}{5}$,1,5)     &        ~~1/40&  2.8014e-04   &  2.0126     & 2.9680e-04  & 2.0332     \\
                        &        ~~1/80&  6.9327e-05   &  2.0146     & 7.2854e-05  & 2.0264      \\
                        &        ~~1/160&  1.7138e-05  &  2.0162     & 1.7845e-05  & 2.0295       \\\hline % 画底端的横线
%%%%%%%%%%%%%%%%%%%%%%%%%%%%%%%%%%%%%%%%%%%%%%%%%%%%%%%%%%%%%%%%%%%%%%%%%%%%%%%%%%%%%%%%%%%%%%%%%%%%%%%%%%%%%%%%%%%%%%%%%%%%%%%%%%%
                        &        ~~1/20&  8.0830e-05   &             & 7.6493e-05               \\
(3,1,5)                 &        ~~1/40&  2.0077e-05   &  2.0094     & 1.8679e-05  & 2.0339      \\
                        &        ~~1/80&  4.9428e-06   &   2.0221    & 4.5848e-06  & 2.0264       \\
                        &        ~~1/160&  1.2151e-06  &   2.0243    & 1.1222e-06  & 2.0305        \\\hline % 画底端的横线
    \end{tabular*}\label{table:4.1}%\vspace{-15pt}
  \end{center}
\end{table}

%\begin{table}[h]\fontsize{9.5pt}{12pt}\selectfont%生成浮动表格
% \begin{center}%\def\tabcolsep{28.5pt}%表格居中
%  \caption {The maximum errors and convergence orders for  (\ref{2.21}) with  $\nu=2$, $h=\tau,$ $U(x)=x$, $J=\sqrt{-1}$.} \vspace{5pt}%标题，离表格一定的距离
%\begin{tabular*}{\linewidth}{@{\extracolsep{\fill}}*{8}{c}}                                    \hline  %画顶端的横线
%$\left(\lambda,\rho,\eta\right)$ &$h$ & $\alpha=1.3,\gamma=0.8$  & Rate        & $\alpha=1.8,\gamma=0.3$ &   Rate   \\\hline
%                         &        ~~1/20&  1.6934e-04  &             & 1.3636e-04  &      \\
%(3,1,-10)                &        ~~1/40&  4.2870e-05  &  1.9819     & 3.4057e-05  & 2.0014     \\
%                         &        ~~1/80&  1.0901e-05  &  1.9755     & 8.5262e-06  & 1.9980       \\
%                         &        ~~1/160&  2.7608e-06 &  1.9813     & 2.1332e-06  & 1.9989      \\\hline % 画底端的横线
%%%%%%%%%%%%%%%%%%%%%%%%%%%%%%%%%%%%%%%%%%%%%%%%%%%%%%%%%%%%%%%%%%%%%%%%%%%%%%%%%%%%%%%%%%%%%%%%%%%%%%%%%%%%%%%%%%%%%%%%%%%%%%%%%%%%
%                         &        ~~1/20&  3.7190e-05  &             &  4.4547e-05  &     \\
%(3,20,-10)               &        ~~1/40&  9.9838e-06  &  1.8973     &  1.2266e-05  & 1.8607      \\
%                         &        ~~1/80&  2.3938e-06  &   2.0603    & 3.0769e-06   & 1.9951       \\ % 画底端的横线
%                         &        ~~1/160&  5.6803e-07 &  2.0753     &  7.5115e-07  & 2.0343       \\\hline % 画底端的横线
%    \end{tabular*}\label{table:4.2}%\vspace{-15pt}
%  \end{center}
%\end{table}

Tables  \ref{table:4.1}  shows that the schemes (\ref{2.21}) with $\nu=2$ have the global truncation errors $\mathcal{O} (\tau^2+h^2)$  at time $T=1$.
%%%%%%%%%%%%%%%%%%%%%%%%%%%%%%
%%%%%%%%%%%%%%%%%%%%%%%%%%%%%%%%
\begin{example}\label{example4}\end{example}
Consider (\ref{1.1}) on a finite domain with $0<x <2$,  $0<t \leq 4$, the coefficient  $K=1$.
The initial condition is $x^2(2-x)^2$ with the homogeneous boundary conditions and the forcing function is $f(x,p,t)=0$. The solution of the corresponding steady-state equation of (\ref{1.1})  belongs to $C^{\alpha/2}(\mathbb{R})$ \cite{Ros-Oton:14}.

Since the analytic solutions is unknown for Example \ref{example4}, the order of the convergence of the numerical results are computed by the following formula
\begin{equation*}
  {\rm Convergence ~Rate}=\frac{\ln \left(||G_{2h}^N-G_{h}^N||_\infty/||G_{h}^N-G_{h/2}^N||_\infty\right)}{\ln 2}.
\end{equation*}

\begin{table}[h]\fontsize{9.5pt}{12pt}\selectfont%生成浮动表格
 \begin{center}%\def\tabcolsep{28.5pt}%表格居中
  \caption {The maximum errors and convergence orders for  (\ref{2.21}) with  $\nu=2$, $\tau=\frac{1}{400},$ $U(x)=x$, $J=\sqrt{-1}$.} \vspace{5pt}%标题，离表格一定的距离
\begin{tabular*}{\linewidth}{@{\extracolsep{\fill}}*{8}{c}}                                    \hline  %画顶端的横线
$\left(\lambda,\rho,\eta\right)$ &$h$ & $\alpha=1.3,\gamma=0.8$  & Rate        & $\alpha=1.8,\gamma=0.3$ &   Rate   \\
~                                &~~  & $||G_{h}^N-G_{h/2}^N||_\infty$  & ~        & $||G_{h}^N-G_{h/2}^N||_\infty$ &      \\\hline
                        &        ~~1/10&  3.1725e-05   &             & 3.8133e-05  &           \\
($\frac{1}{5}$,1,5)     &        ~~1/20&  7.4247e-06   &  2.0952     & 8.1807e-06  & 2.2207     \\
                        &        ~~1/40&  1.8780e-06   &  1.9831     & 1.9971e-06  & 2.0343      \\
                        &        ~~1/80&  4.8609e-07   &  1.9499     & 5.0141e-07  & 1.9938      \\\hline % 画底端的横线
    \end{tabular*}\label{table:4.3}%\vspace{-15pt}
  \end{center}
\end{table}

Table \ref{table:4.3} shows that the scheme (\ref{2.21}) still preserves the desired second-order convergence for the `physical' equation (without the artificial source term).

%%%%%%%%%%%%%%%%%%%%%%%%%%%%%%%%%  add example %%%%%%%%%%%%%%%

\subsection{First passage time}

The first passage time has many applications in physics, astronomy, and queuing theory, which is defined as the time $t_f$ that takes a particle stating at $x=-c_0$ ($c_0>0$) to hit a $x=0$ for the first time.  Obviously, $t_f$ is a random variable being different from the fixed time $t$ used the definition of the functional $A$ given in the Introduction section. But we still can build the connection between the probability of $t_f>t$ and $G(x,p,t)$ \cite{Wu:16}, i.e.,
$$P_r\{ t_f>t\}=\lim_{p\rightarrow \infty}G(x,p,t).$$
Behaviors of $P_r\{ t_f>t\}$ are simulated by solving the time tempered fractional Feynman-Kac equations (\ref{1.1}) with the algorithm (\ref{2.21}) on a finite domain  $-100< x < 100 $,  $0<t \leq 800$  with the coefficient $K=1$ and
%the forcing function $f(x,p,t)=0$ and
\begin{equation*}
U(x)=\left\{ \begin{array}
{l@{\quad} l}0,~~~~x \leq 0,\\
1,~~~~x  >0.
\end{array}
\right.
\end{equation*}
It can be easily seen that the reasonable initial condition should be taken as $G(x,p,0)=1$. And we take
 $p=10^8$ and $\tau=h=1/20$.
%%%%%%%% Figure  %%%%%%%%%%%%%%%%%%%%%%%%%%%%%%%%%%%%%%%%%%   Figure      %%%%%%%%%%%%%%%%%%%%%%%%%   Figure      %%%%%%%%%%%%%%%%%%%%%%%%%%%%%%%%%
%%%%%%%%%%%%%%%%%%%%%%%%%%%%%%%%%%%%%%%%%%%%%%%%%%
\begin{figure}[t]
%%%%%%%%%%
    \begin{minipage}[t]{0.50\linewidth}
    \includegraphics[scale=0.45]{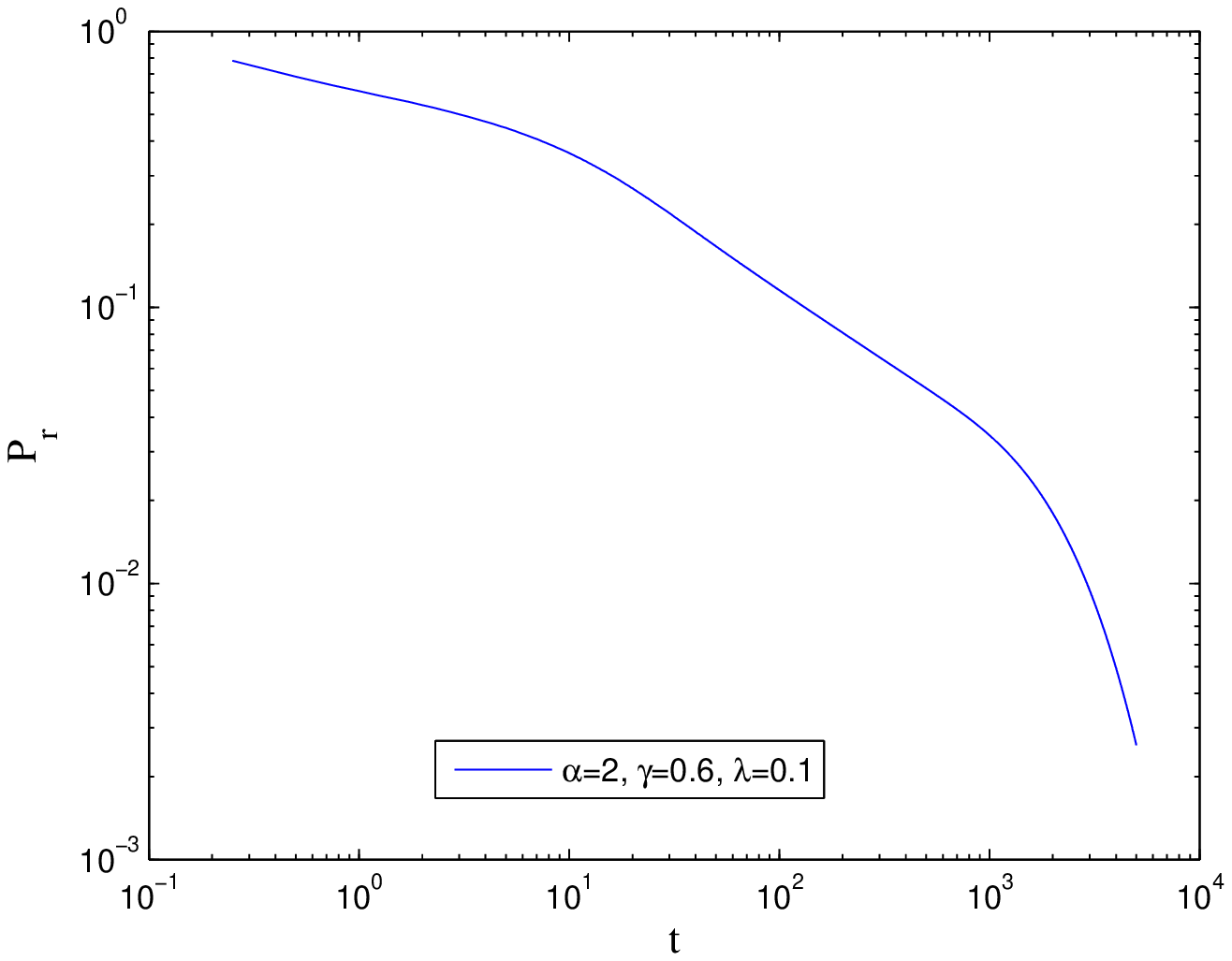}
   % \caption{Behaviors of $P_r\{ t_f>t\}$.}  \label{Figure.1}
    \end{minipage}
%%%%%%%%%%
    \begin{minipage}[t]{0.50\linewidth}
    \includegraphics[scale=0.45]{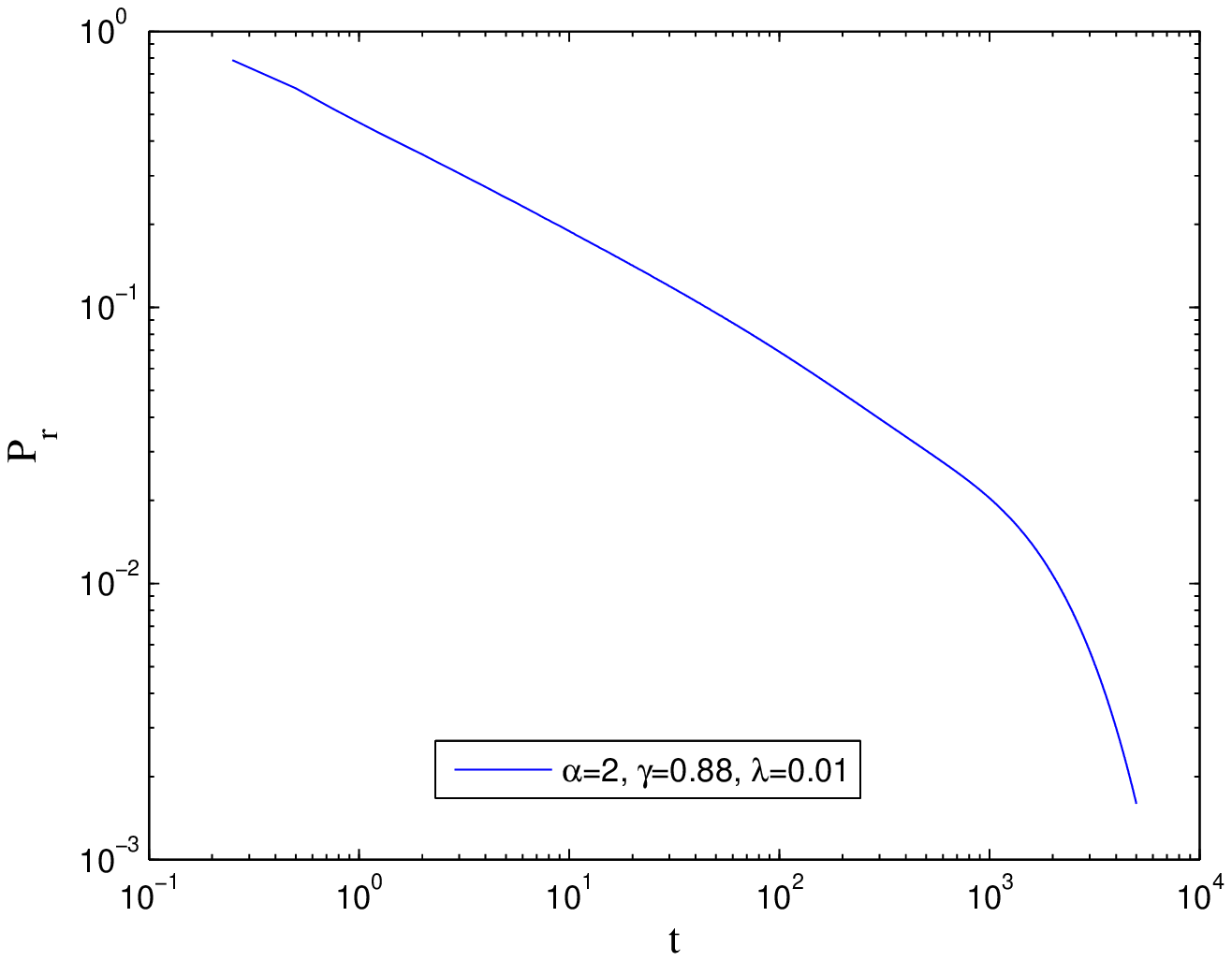}
   % \caption{Behaviors of $P_r\{ t_f>t\}$.}  \label{Figure.2}
    \end{minipage}
%%%%%%%%%%
    \begin{minipage}[t]{0.50\linewidth}
    \includegraphics[scale=0.45]{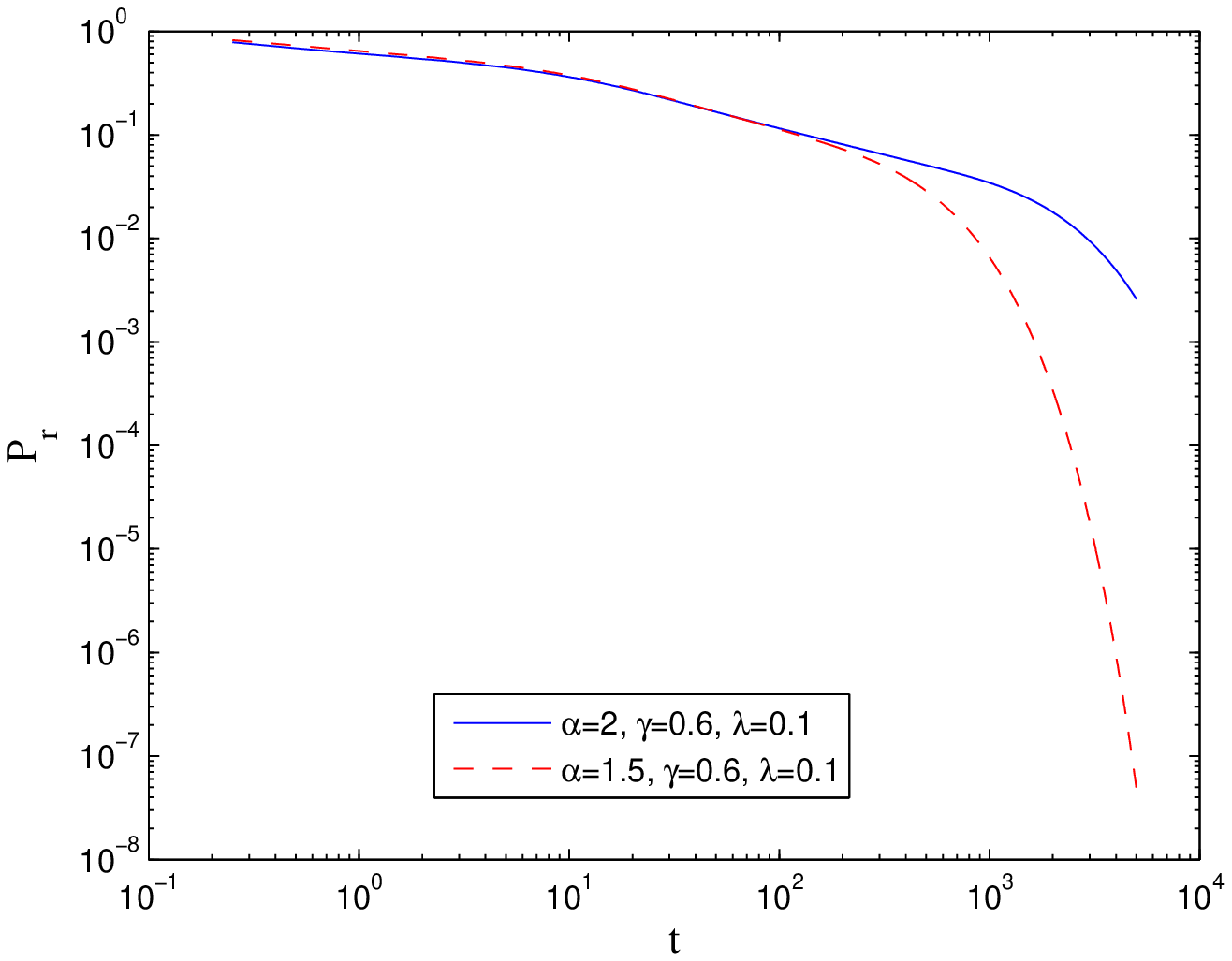}
   % \caption{Behaviors of $P_r\{ t_f>t\}$.}  \label{Figure.3}
    \end{minipage}
%%%%%%%%%%
    \begin{minipage}[t]{0.50\linewidth}
    \includegraphics[scale=0.45]{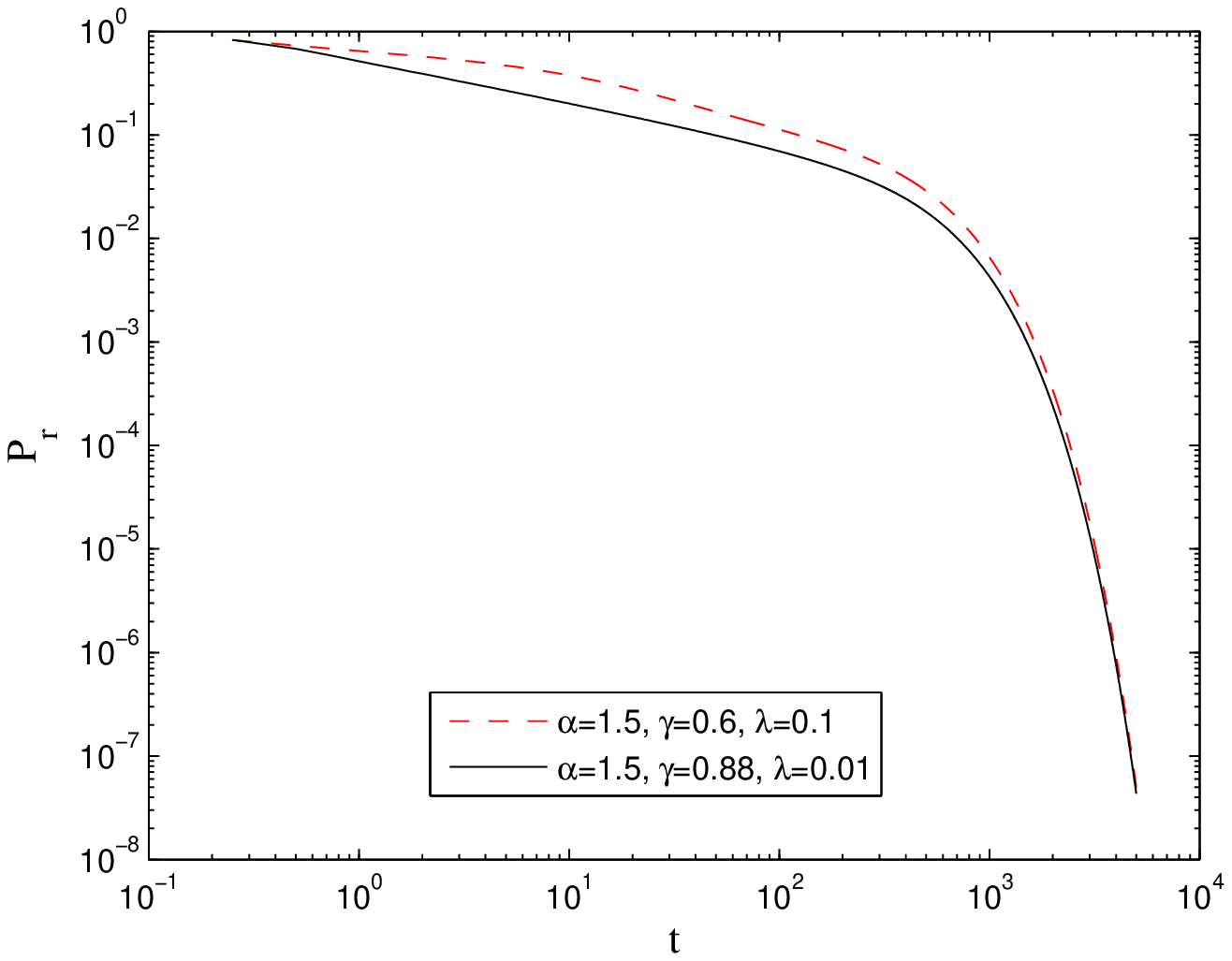}
   % \caption{Behaviors of $P_r\{ t_f>t\}$.}  \label{Figure.4}
    \end{minipage}%\label{Fig1}
\caption{Behaviors of $P_r\{ t_f>t\}$ with $c_0=0.05$.}
\end{figure}
Fig. 4.1 shows the simulation results for the probability of the first passage time.  The top-left and top-right ones are for the probabilities of the first passage time of the anomalous dynamics with Gaussian steplength distribution and tempered power-law waiting time distribution; and the simulation results confirm the ones given in \cite{Wu:16}, where the broken curves are used for indicating the slopes. The bottom-left and bottom-right ones also show the probabilities of the first passage time of the anomalous dynamics with power-law steplength distribution.

\section{Conclusions}
With the research deepening of the anomalous dynamics, instead of just focusing on the probability distribution of the position of particles at time $t$, we discuss the distribution of the functionals which involve the entire trajectories. The goal of this paper is to provide the efficient numerical schemes to solve the recently derived time tempered Feynman-Kac equation, which describes the functional distributions of the trajectories of the particles with the tempered power-law waiting time distribution. The high order discretizations of the newly introduced operators are presented and the schemes of the model are designed with detailed stability and convergence proofs. The effectiveness of the algorithms is carefully checked, including simulating the probabilities of the first passage time, being also a specific application of the model. In fact, the second order convergence is also got for solving the original (without the artificial source term) `physical' equation.

\bibliographystyle{amsplain}

\end{document}